\newtheorem{theorem}{Theorem}[section]
\newtheorem{corollary}{Corollary}[section]
\newtheorem{remark}{Remark}[section]
\numberwithin{equation}{section}
\title{Stochastic velocity motions and processes with random time}
\author{ \Large Alessandro De Gregorio\\
 Department of Statistics, Probability and Applied Statistics\\
``Sapienza'' University of Rome\\\
 P.le Aldo Moro, 5 - 00185, Rome - Italy \\
 alessandro.degregorio@uniroma1.it}
\begin{document}
\maketitle
\begin{abstract}

The aim of this paper is to analyze a class of random motions which models the motion of a particle on the real line with random velocity and subject to the action of the friction.  The speed randomly changes when a Poissonian event occurs. We study the characteristic and the moment generating function of the position reached by the particle at time $t>0$. We are able to derive the explicit probability distributions in few cases for which discuss the connections with the random flights. The moments are also widely analyzed.  

For the random motions having an explicit density law, further interesting probabilistic interpretations emerge if we deal with them varying up a random time. Essentially, we consider two different type of random times, namely Bessel and Gamma times, which contain, as particular cases, some important probability distributions (e.g. Gaussian, Exponential). In particular, for the random processes built by means of these compositions, we derive the probability distributions fixed the number of Poisson events.

Some remarks on the possible extensions to the random motions in higher spaces are proposed. We focus our attention on the persistent planar random motion.
\\

{\it Keywords}: Bessel process, Gamma process, iterated Brownian motion, Laplace distribution, Struve function, random flight, random time, telegraph process.
 
\end{abstract}

\section{Introduction}
Diffusion processes have a central position in the theory of probability. Nevertheless, their main shortcoming is the unboundeness of the first variation. For this reason a diffusion process is often not suitable to describe the real motion and many researchers have proposed alternative models having finite speed.  

The prototype of the random motions with finite velocity is the telegraph process. By assuming that the change of direction is governed by a homogeneous Poisson process $N(t)$ with rate $\lambda>0$, we can define the telegraph process as follows
\begin{equation}\label{telegraph}
T(t)=V(0)\sum_{j=1}^{N(t)+1}(s_j-s_{j-1})(-1)^{j-1},
\end{equation}
where $V(0)$ is the initial velocity assuming the values $+c$ or $-c$ with probability $\frac12$ and the times  $s_j$ are the instants in which the $j$-th Poisson event occurs. Furthermore, $T(t)$ is linked with the hyperbolic partial differential equations, because its density law is the fundamental solution of the equation
\begin{equation}\label{pde}
\frac{\partial^2 u}{\partial t^2}+\lambda \frac{\partial u}{\partial t}=c^2\frac{\partial^2 u}{\partial x^2}.
\end{equation}
  
The telegraph process has been studied by several authors; for example Orsingher (1990), Foong and Kanno (1994), Di Crescenzo (2001), Stadje and Zacks (2004) and Zacks (2004). This model seems to be suitable to describe the real motion and it emerges in different fields. In the following, we provide a brief review of the possible applications of the telegraph process and its generalizations.
\begin{itemize}
\item {\bf Physics}. In the physical mathematics the connection between the telegraph process and the electromagnetic theory strongly emerges. In particular, the equation \eqref{pde} describes the propagation of a damped wave along a wire. Weiss (2002) provided an interesting review of the physical applications of the process $T(t)$.

\item {\bf Biology}. Models governed by hyperbolic differential equations and in particular the telegraph equation have been used to describe the movement of chemotaxis (see Hillen {\it et al.}, 2000).

\item{\bf Ecology}. The telegraph process
is useful in ecology to model
the displacement of wild animals on the soil (see Holmes {\it et al.}, 1994). In fact, this model
preserves the property of animals to move at finite velocity along
the same direction.

\item {\bf Finance}. Di Masi {\it et al.} (1994) proposed to model the volatility of financial markets in
terms of $T(t)$. Di Crescenzo and Pellerey (2002) introduced the
geometric telegraph process as a model to describe the dynamics of the price of risky assets, i.e. the authors replaced the standard Brownian motion with the standard telegraph process. Ratanov (2007, 2008) proposed to model financial markets
using the telegraph process with two different velocities (as the risky asset tends upward or downward) and jumps occurring at switching
velocities.

\item {\bf Actuarial Sciences}. Mazza and Rulliere (2004) established a link between hitting times associated with the risk process (time of ruin of the insurance company) and the telegrapher's motion.

\end{itemize}

A statistical analysis of the random model $T(t)$ has been performed by De Gregorio and Iacus (2008) and Iacus and Yoshida (2009), when the sample path is observed at discrete times.

In this paper we will analyze a one-dimensional random motion which in somehow generalized $T(t)$. At time $t>0$, the random speed of the motion is defined by $v=c\cos\theta$, where $c$ is a positive constant and $\theta$ is a random variable with density law given by
$$f_\nu(\theta)=\frac{\Gamma(\nu+1)}{\sqrt{\pi}\Gamma(\nu+\frac12)}\sin^{2\nu}\theta,\quad \theta\in(0,\pi), \,\nu \geq 0.$$
So, we consider a particle starting from the origin, choosing initially a velocity $c\cos\theta_1$ with probability law given by $f_\nu(\theta)$. The particle travels maintaining its motion with the same velocity until a Poisson event happens. Now, the particle changes velocity independently to the previous one according to $f_\nu(\theta)$ again and so on. At time $t$, we indicate the particle position on the real line with $X_\nu(t)$. 

The function $f_\nu(\theta)$ allows us to define a random motion in which the small displacements have bigger probability than large ones. Therefore, we have the physical phenomenon called {\it inertia}. Indeed, every object or particle moving on a surface suffers an effect due to the friction of the surface itself. Then, the particle will tend to go away from  the starting point slowly. If $\nu=0$, one has that $f_0(\theta)=\frac{1}{\pi}$ becomes  the uniform distribution on the semicircle with radius one. In this last case the particle moves without inertia.

In the same spirit of Orsingher and De Gregorio (2007b), we study the conditional characteristic and moment generating function of $X_\nu(t)$. We are able to derive the explicit probability distribution, conditioned to number of Poisson event, in few cases: $\nu=0$ and $\nu=1$. Therefore, let $N(t)$ denote the underlying Poisson process governing the changes of the velocity, we have that
\begin{align*}
&\frac{P(X_0(t)\in dx|N(t)=n)}{dx}=\frac{\Gamma\left(\frac n2\right) \Gamma\left(\frac n2+1\right)}{2\pi\Gamma(n)} 
\left(\frac{2}{ct}\right)^{n}(c^2t^2-x^2)^{\frac{n-1}{2}},\\
&\frac{P(X_1(t)\in dx|N(t)=n)}{dx}=\frac{\Gamma\left(n+1\right) \Gamma\left(n+2\right)}{2\pi\Gamma(2n+2)} 
\left(\frac{2}{ct}\right)^{2n+2}(c^2t^2-x^2)^{n+\frac{1}{2}},
\end{align*}
with $n\geq 1$.
These results, permit us to put in light the relationship between $X_0(t)$ and $X_1(t)$ and the random flights studied by Orsingher and De Gregorio (2007b). A random flight is a continuous time random walk defined similarly to $X_\nu(t)$, but with direction chosen uniformly on an hypersphere. By means of the above probabilities, we can claim that, in distribution, $X_0(t)$ and $X_1(t)$ correspond respectively to the projection onto real axis of a planar and four dimensional random flight.

In Section 3 we derive the first two moments of $X_\nu(t)$, while for $\nu=0,1,$ we are able to explicit the moments of order $p$ by means of special functions. Moreover, we will point out some connections with the related random motions on hyperbolic spaces.

In the second part of this paper, we focus our attention on the random motions $X_0(t)$ and $X_1(t)$ evolving up a random time, leading to interesting interpretations of the related conditional density laws. In other words, we will introduce families of random times, containing some important random variables.  In the probabilistic literature there are several papers devoted to analyze the properties of stochastic processes with random times. For example, the Brownian motion with Brownian time (iterated Brownian motion) has been studied by Burdzy (1993), Khoshnevisan and Lewis 
(1996), Allouba (2002), DeBlassie (2004) and Nane (2006). A link between the solution of fractional partial differential equations and the iterated Brownian motion has been extensively investigated by Orsingher and Beghin (2009). The iterated Brownian motion has been proposed as a model for a diffusion in a 
crack (see Burdzy and Khoshnevisan, 1998). Beghin and Orsingher (2009) have studied a planar random motion with Brownian times; the authors have provided the conditional probability on the number of the events of a fractional Poisson process. 

It is interesting to consider random times derived by Brownian motion. For example, the Bessel process $R^d(t)=\sqrt{\sum_{j=1}^d B_j(t)}$, where $B_j$ are independent Brownian motions. Under the condition $v_m>\frac d2-1$, $m=0,1,$ the following result holds
\begin{equation*}
P\left\{X_m(R^d(t))\in dx|N(t)=n\right\}=\frac{dx}{B\left(\frac d2,v_m-\frac d2+1\right)}\int_0^1w^{\frac d2-1}(1-w)^{v_m-\frac d2}\frac{e^{-\frac{x^2}{2c^2t w}}}{\sqrt{2\pi tw}c}dw
\end{equation*}
where $v_0=\frac n2$, $v_1=n+1$, $n\geq 1$ and $B(a,b)$ is a Beta function of parameters $a$ and $b$. Hence, the random motion $X_m$ stopped at  Bessel random time changes drastically its probability distribution which becomes a Gaussian with variance given by a Beta random variable (up the scale factor $c^2 t$). These results can be generalized by considering a $l$-times interated Bessel process $\mathcal{R}_l^d(t)=R_1^d(R_2^d(...(R_{l+1}^d(t)...)))$.

For $d=1$, the Bessel process becomes a reflected (around zero) Brownian motion $|B(t)|$, that is the Brownian time arising in the iterated Brownian motion. Then, the above condition is always satisfied and the following distribtional equality emerges
$$X_m(|B(t)|)\stackrel{d}{=}B\left(\frac1tX_m^2(t)\right).$$
For $d=2$, we obtain a time distributed as a Rayleigh random variable and $v_m$ is strictly positive for each $n\geq 1$. This last fact permits us to provide the density law of $X_m(R^2(t))$.

Other relationships will be point out considering the composition with the sojourn time of a Brownian motion on the positive axis.

 Let $G_\alpha(t)$ be
 a Gamma process with parameters $\alpha>0$, that is a random process with probability law given by a $g_\alpha(s,t)=\frac{t^\alpha}{\Gamma(\alpha)}t^{\alpha -1}e^{-ts}, s>0,t>0$. The Gamma random variable arising in various applications and it is useful to model the lifetime of a phenomena. For this reason, we deal with a Gamma time and for $X_m(G_\alpha(t))$ obtain that
  \begin{eqnarray*}
 &&P\left\{X_m(G_\alpha(t))\in dx|N(t)=n\right\}\\
 &&=\frac{dx}{\Gamma(\frac{\alpha+1}{2})B(\frac\alpha 2,v_m-\frac\alpha 2+1)} \int_0^1dw w^{\frac{\alpha}{2}-1}(1-w)^{v_m-\frac\alpha 2}\frac{t}{\sqrt{\pi w}c}\left(\frac{t|x|}{2c\sqrt{w}}\right)^{\frac\alpha 2}K_{-\frac\alpha 2}\left(\frac{t|x|}{c\sqrt{w}}\right)
 \end{eqnarray*}
with $v_m>\frac \alpha2-1$.
As for the Bessel process, this result can be extended considering the $l$-iterated Gamma random times $\mathcal{G}_l(t)=G_{\alpha_1}(G_{\alpha_2}(\ldots (G_{\alpha_{l+1}}(t))\ldots))$.
 
 For $\alpha=1$, $G_1(t)$ becomes an exponential process and  $X_m(G_1(t))$ is distributed as a Laplace random variable with parameter $\frac{t^2}{|X_m(t)|}$.

To complete the discussion on the random times, we deal with a clock obtained mixing $R^d(t)$ and $G_\alpha(t)$ and studying the effect on the probabilistic structure of the random motion $X_m(t)$.

It is not an hard task to extend the previous results to the planar and four dimensional random flights with randomly varying time. Moreover, in the last Section, we will discuss the possibility to consider a random flight with drift, that is persistent along a portion of the surface. 

\section{Moving randomly with friction}
Let us consider a random motion which describes the displacements of a particle starting from the origin of the real axis. The particle moves forward or backward with random velocity $v=c\cos\theta$, where $c$ is a positive constant, while $\theta$ is a random variable having density law 
\begin{equation}\label{dens:vel}
f_\nu(\theta)=\frac{\Gamma(\nu+1)}{\sqrt{\pi}\Gamma(\nu+\frac12)}\sin^{2\nu}\theta,
\end{equation}
with $\,\theta\in(0,\pi)$ and $\nu\geq 0$.
Therefore, the particle moves with a velocity, randomly chosen on $x$-component of the unit semicircle according to \eqref{dens:vel}, and it performs its motion until a Poisson event happens when another velocity will be chosen independently from the previous one. The position at time $t$ of the particle is defined as follows
\begin{equation}\label{gentel}
X_\nu(t)=c\sum_{j=1}^{N(t)+1}(s_j-s_{j-1})\cos\theta_j
\end{equation}
where $N(t)$ represents the underlying homogenous Poisson process with rate $\lambda>0$ governing the changes of velocity, $s_j,j=1,...,N(t)+1(s_0=0,s_{N(t)+1}=t)$, is the time of occurrence of the $j$-th Poisson event and $\theta_j$'s are independent random variables distributed as in \eqref{dens:vel}. Furthermore, also $N(t)$ and $\theta_j$ are independent.  From \eqref{gentel} emerges that $X_\nu(t)$ is a telegraph-type process similar to $T(t)$ defined in \eqref{telegraph}. We note that $X_\nu(t)$ has an infinite number of possible velocities and it has no necessarily alternating directions. Further, the particle at time $t$ is located inside the interval $(-ct,ct)$, and $X_\nu(t)$ has a fully absolutely continuous probability distribution, whilst in the law of $T(t)$ a singular component appears (see Orsingher, 1990).

\begin{figure}[t]
\centering{\includegraphics[width=10.5cm]{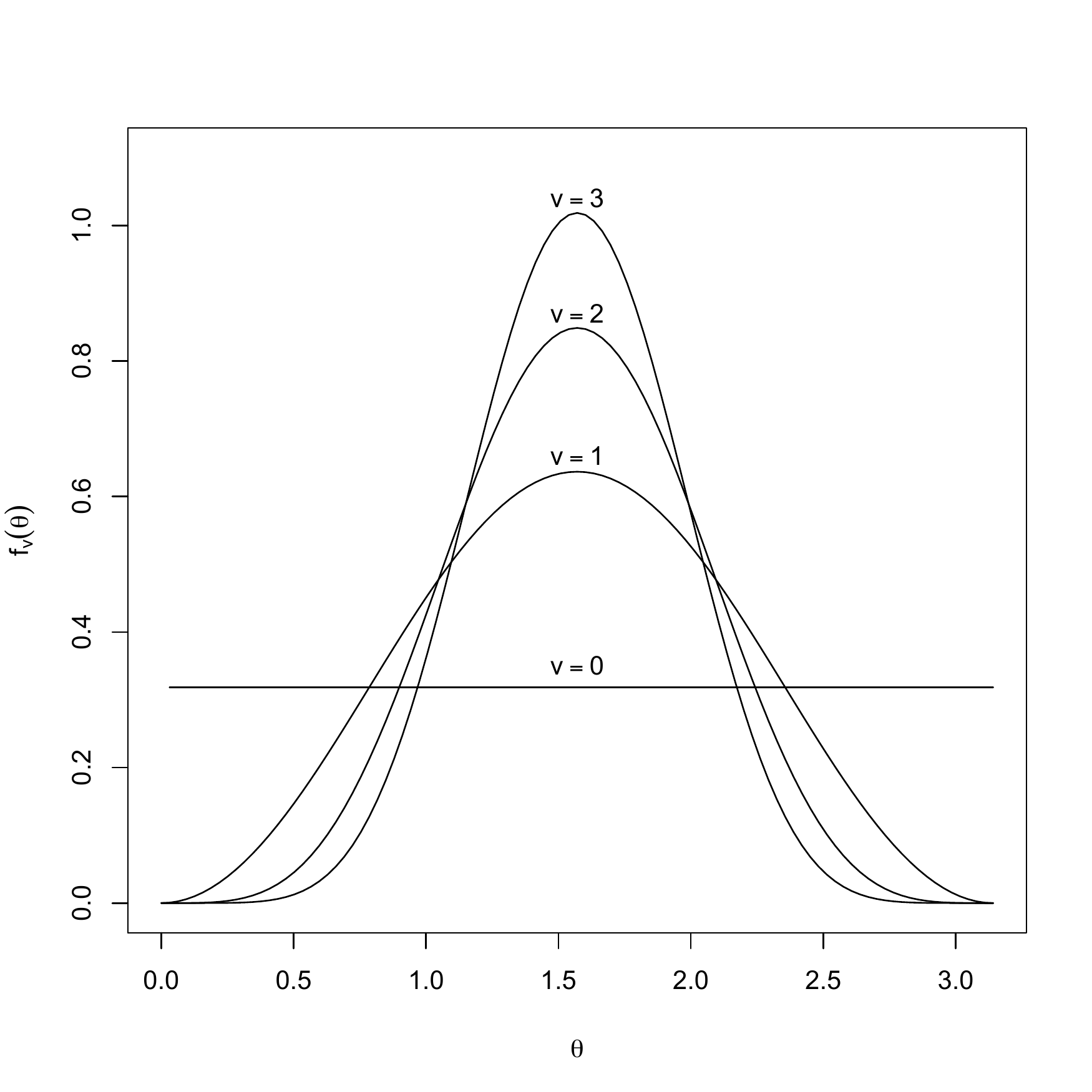}}
\caption{The behavior of $f_\nu(\theta)$ for $\nu=0,1,2,3$.}
\end{figure}

We underline that for values of $\theta$ close to $\frac\pi2$ the density law $f_\nu(\theta)$ assigns probability mass greater than ones near 0 or $\pi$. This means that the process moves away slowly from the starting point. This represents the effect of the friction of the surface on which the particle performs its motion.  When $\nu$ assumes high values, the density $f_\nu(\theta)$ is highly concentrated aroung $\frac\pi 2$ (see Figure 1) and then the motion is slowed down. For this reason $\nu$ represents the level of friction to which is subject the motion. In other words, $X_\nu(t)$ defines a whole class of random motions indexed by the parameter $\nu$, namely the level of inertia. For $\nu=0$, we reobtain the uniform distribution on the semicircle with radius one and $X_0(t)$ is exactly the $x$-component of the planar random flights studied in Orsingher and De Gregorio (2007b) or equivalently the projection onto real line of the sample path of a planar random flight.

Our first result concerns the characteristic function of $X_\nu(t)$ conditioned on the number of Poisson events during the time interval $[0,t]$.
\begin{theorem}\label{chf}
The conditional charactersitic function of $X_\nu(t)$ is equal to
\begin{equation}\label{chfn}
E\left\{e^{i\alpha X_\nu(t)}|N(t)=n\right\}=\frac{n!}{t^n}(2^\nu\Gamma(\nu+1))^{n+1}\int_0^tds_1\cdots\int_{s_{n-1}}^tds_{n}\prod_{j=1}^{n+1}\frac{J_\nu(\alpha c(s_j-s_{j-1}))}{(\alpha c(s_j-s_{j-1}))^\nu},
\end{equation}
for $n\geq 1$, while for $n=0$, one has
\begin{equation}\label{chf0}
E\left\{e^{i\alpha X_\nu(t)}|N(t)=0\right\}=\left(\frac{2}{\alpha ct}\right)^\nu\Gamma(\nu+1) J_\nu(\alpha ct),
\end{equation}
where $J_\nu(x)=\sum_{k=0}^\infty(-1)^k\frac{(x/2)^{2k+\nu}}{\Gamma(k+1)\Gamma(k+\nu+1)}$ is the well-known Bessel function.
\end{theorem}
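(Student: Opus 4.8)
The plan is to condition on the arrival epochs of the underlying Poisson process and then exploit the independence built into \eqref{gentel}. Recall that, conditionally on $N(t)=n$, the vector of instants $(s_1,\dots,s_n)$ is distributed as the order statistics of $n$ independent random variables uniform on $[0,t]$, hence it has joint density $n!/t^n$ on the simplex $\{0<s_1<\cdots<s_n<t\}$. Since the $\theta_j$'s are mutually independent and independent of both $N(t)$ and the $s_j$'s, further conditioning on $(s_1,\dots,s_n)$ makes $X_\nu(t)=c\sum_{j=1}^{n+1}(s_j-s_{j-1})\cos\theta_j$ a sum of independent summands, so that
\begin{equation*}
E\left\{e^{i\alpha X_\nu(t)}|N(t)=n,s_1,\dots,s_n\right\}=\prod_{j=1}^{n+1}E\left\{e^{i\alpha c(s_j-s_{j-1})\cos\theta_j}\right\}.
\end{equation*}

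The analytic heart of the matter is therefore the one-interval characteristic function $E\{e^{iz\cos\theta}\}$ with $\theta$ distributed according to \eqref{dens:vel}. Here I would appeal to the Poisson integral representation of the Bessel function,
\begin{equation*}
J_\nu(z)=\frac{(z/2)^\nu}{\sqrt{\pi}\,\Gamma(\nu+\tfrac12)}\int_0^\pi e^{iz\cos\theta}\sin^{2\nu}\theta\,\de\theta,\qquad \nu\geq0,
\end{equation*}
which, together with the normalizing constant in \eqref{dens:vel}, gives immediately
\begin{equation*}
E\left\{e^{iz\cos\theta}\right\}=\frac{\Gamma(\nu+1)}{\sqrt{\pi}\,\Gamma(\nu+\tfrac12)}\int_0^\pi e^{iz\cos\theta}\sin^{2\nu}\theta\,\de\theta=\left(\frac{2}{z}\right)^\nu\Gamma(\nu+1)\,J_\nu(z).
\end{equation*}
When $n=0$ there is a single sojourn interval of length $t$, so $X_\nu(t)=ct\cos\theta_1$ and the last identity with $z=\alpha ct$ is precisely \eqref{chf0}.

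For $n\geq1$, I would apply the one-interval formula with $z=\alpha c(s_j-s_{j-1})$ to each of the $n+1$ factors above, obtaining
\begin{equation*}
E\left\{e^{i\alpha X_\nu(t)}|N(t)=n,s_1,\dots,s_n\right\}=\left(2^\nu\Gamma(\nu+1)\right)^{n+1}\prod_{j=1}^{n+1}\frac{J_\nu(\alpha c(s_j-s_{j-1}))}{(\alpha c(s_j-s_{j-1}))^\nu},
\end{equation*}
and then integrate this against the density $n!/t^n$ over $\{0<s_1<\cdots<s_n<t\}$, which reproduces \eqref{chfn}. The computation is essentially bookkeeping; the only steps demanding a little care are the identification of the conditional law of the arrival times and the separate (degenerate) treatment of $n=0$, since all the genuine analytic content is already packaged in the Poisson representation of $J_\nu$. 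In that sense there is no real obstacle — the main thing to recognize is that the per-step characteristic function is simply a normalized Bessel function.
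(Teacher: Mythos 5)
Your proposal is correct and follows essentially the same route as the paper: condition on $N(t)=n$ and the arrival epochs (with joint density $n!/t^n$ on the simplex), factor the characteristic function by independence of the $\theta_j$'s, and evaluate each factor via the Poisson integral representation of $J_\nu$, handling $n=0$ directly. The only cosmetic difference is that you make the order-statistics description of the arrival times and the conditional factorization explicit, whereas the paper writes the full iterated integral at once.
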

\begin{proof} In order to prove \eqref{chfn} and \eqref{chf0}, we observe that the Bessel function $J_\nu(x)$ admits the following integral representation
\begin{eqnarray}\label{irbess}
J_\nu(z)&=&\frac{\left(\frac z2\right)^\nu}{\Gamma\left(\nu+\frac12\right)\Gamma\left(\frac12\right)}\int_0^\pi e^{iz\cos\phi}\sin^{2\nu}\phi d\phi
\end{eqnarray}
with $Re(\nu)>0$. For $n\geq 1$,we get that
\begin{eqnarray*}
&&E\left\{e^{i\alpha X_\nu(t)}|N(t)=n\right\}\\
&&=\frac{n!}{t^n}\int_0^tds_1\cdots\int_{s_{n-1}}^tds_{n}\frac{\Gamma(\nu+1)}{\sqrt{\pi}\Gamma(\nu+\frac12)}\int_0^{\pi}\sin^{2\nu}\theta_1d\theta_1\cdots\frac{\Gamma(\nu+1)}{\sqrt{\pi}\Gamma(\nu+\frac12)}\int_0^{\pi}\sin^{2\nu}\theta_{n+1}d\theta_{n+1}\\
&&\quad\times\exp\left\{i\alpha c\sum_{j=1}^{n+1}(s_j-s_{j-1})\cos\theta_j\right\}\\
&&=\frac{n!}{t^n}\int_0^tds_1\cdots\int_{s_{n-1}}^tds_{n}\prod_{j=1}^{n+1}\left\{\frac{\Gamma(\nu+1)}{\sqrt{\pi}\Gamma(\nu+\frac12)}\int_0^{\pi}e^{i\alpha c(s_j-s_{j-1})\cos\theta_j}\sin^{2\nu}\theta_jd\theta_j\right\}\\
&&=\frac{n!}{t^n}(2^\nu\Gamma(\nu+1))^{n+1}\int_0^tds_1\cdots\int_{s_{n-1}}^tds_{n}\prod_{j=1}^{n+1}\frac{J_\nu(\alpha c(s_j-s_{j-1}))}{(\alpha c(s_j-s_{j-1}))^\nu}
\end{eqnarray*}
where in the last step we have used the integral representation \eqref{irbess}. For $N(t)=0$, the position of the particle at time $t$ is $X(t)=ct\cos\theta$ and then
\begin{eqnarray*}
E\left\{e^{i\alpha X_\nu(t)}|N(t)=0\right\}&=&\frac{\Gamma(\nu+1)}{\sqrt{\pi}\Gamma(\nu+\frac12)}\int_0^{\pi} e^{i\alpha ct\cos\theta}\sin^{2\nu}\theta d\theta=\left(\frac{2}{\alpha ct}\right)^\nu\Gamma(\nu+1) J_\nu(\alpha ct).
\end{eqnarray*}
\end{proof}
It is interesting to observe that the charecterstic function of $X_\nu(t)$ has the same structure of the one emerging in the problem of $d$-dimensional random flights (see formula (2.3) in Orsingher and De Gregorio, 2007b), where the parameter $\nu$ is replaced by $\frac d2-1$.

For the moment generating function we present the following Theorem.

\begin{theorem}\label{mg} 
The conditional moment generating function of $X_\nu(t)$ becomes
\begin{eqnarray}\label{genmom}
E\left\{e^{\beta X_\nu(t)}|N(t)=n\right\}=\frac{n!}{t^n}(2^\nu\Gamma(\nu+1))^{n+1}\int_0^tds_1\cdots\int_{s_{n-1}}^tds_{n}\prod_{j=1}^{n+1}\frac{I_\nu(\beta c(s_j-s_{j-1}))}{(\beta c(s_j-s_{j-1}))^\nu}\end{eqnarray}
given $n\geq 1$, while if $n=0$ the following expression yields
\begin{equation}\label{genmom0}
E\left\{e^{\beta X_\nu(t)}|N(t)=0\right\}=\left(\frac{2}{\beta ct}\right)^\nu\Gamma(\nu+1) I_\nu(\beta ct)
\end{equation}
where $I_\nu(x)=\sum_{k=0}^\infty\frac{(x/2)^{2k+\nu}}{\Gamma(k+1)\Gamma(k+\nu+1)}$ represents the modified Bessel function.
\end{theorem}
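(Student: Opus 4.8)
The plan is to run the same computation as in the proof of Theorem~\ref{chf}, with the Bessel function $J_\nu$ replaced throughout by the modified Bessel function $I_\nu$. The only new analytic ingredient is the integral representation
\begin{equation}\label{irmbess}
I_\nu(z)=\frac{\left(\frac z2\right)^\nu}{\Gamma\left(\nu+\frac12\right)\Gamma\left(\frac12\right)}\int_0^\pi e^{z\cos\phi}\sin^{2\nu}\phi\,d\phi,\qquad Re(\nu)>0,
\end{equation}
the exact analogue of \eqref{irbess}. One can quote \eqref{irmbess} from tables, or derive it by expanding $e^{z\cos\phi}$ and integrating term by term against $\sin^{2\nu}\phi$ (a Wallis-type integral), or directly from \eqref{irbess} by observing that $z\mapsto J_\nu(z)/z^\nu$ and $z\mapsto I_\nu(z)/z^\nu$ are even entire functions linked by $J_\nu(iz)/(iz)^\nu=I_\nu(z)/z^\nu$, which is immediate from the two power series.

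With \eqref{irmbess} in hand the argument is word for word that of Theorem~\ref{chf}. For $n\ge 1$, conditionally on $\{N(t)=n\}$ the Poisson instants $s_1<\dots<s_n$ are the order statistics of $n$ i.i.d.\ uniform variables on $[0,t]$, hence have joint density $n!/t^n$ on the simplex $0<s_1<\dots<s_n<t$, and the angles $\theta_1,\dots,\theta_{n+1}$ are i.i.d.\ with law \eqref{dens:vel}, independent of the $s_j$; inserting \eqref{gentel} and factorising the exponential gives
\begin{equation*}
E\left\{e^{\beta X_\nu(t)}|N(t)=n\right\}=\frac{n!}{t^n}\int_0^tds_1\cdots\int_{s_{n-1}}^tds_n\prod_{j=1}^{n+1}\left\{\frac{\Gamma(\nu+1)}{\sqrt{\pi}\,\Gamma(\nu+\frac12)}\int_0^\pi e^{\beta c(s_j-s_{j-1})\cos\theta_j}\sin^{2\nu}\theta_j\,d\theta_j\right\},
\end{equation*}
and applying \eqref{irmbess} to each inner integral yields \eqref{genmom}. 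For $n=0$ we have $X_\nu(t)=ct\cos\theta$ with $\theta$ distributed as in \eqref{dens:vel}, and a single use of \eqref{irmbess} gives \eqref{genmom0}. Since $|X_\nu(t)|<ct$ almost surely one has $|e^{\beta c(s_j-s_{j-1})\cos\theta_j}|\le e^{|\beta|ct}$, so Fubini's theorem legitimises the factorisation and the moment generating function is finite for every real (indeed complex) $\beta$.

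Alternatively, no fresh computation is needed at all: because $X_\nu(t)$ is bounded, both sides of \eqref{chfn} and \eqref{chf0} are entire functions of the exponential parameter, and the formulas \eqref{genmom}--\eqref{genmom0} follow from \eqref{chfn}--\eqref{chf0} by analytic continuation, using only the elementary passage $J_\nu(z)/z^\nu\to I_\nu(z)/z^\nu$ between the two power series. I expect no genuine obstacle here; the sole point requiring a moment's care is the identity \eqref{irmbess} (equivalently, that passage from $J_\nu$ to $I_\nu$), while the probabilistic core—the order-statistics description of the Poisson epochs and the independence of the $\theta_j$—is identical to that of Theorem~\ref{chf}.
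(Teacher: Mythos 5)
Your proposal is correct and follows exactly the paper's route: the paper's own proof simply states that the result follows analogously to Theorem \ref{chf} upon replacing the integral representation of $J_\nu$ with the corresponding one for $I_\nu$, which is precisely your main argument. The additional remarks on Fubini and on analytic continuation are sound but not needed beyond what the paper does.
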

\begin{proof}
The proof for \eqref{genmom0} and \eqref{genmom} follows analogously to the one developed for \eqref{chfn} and \eqref{chf0}, noticing that
$$I_\nu(z)=\frac{\left(\frac z2\right)^\nu}{\Gamma\left(\nu+\frac12\right)\Gamma\left(\frac12\right)}\int_0^\pi e^{z\cos\phi}\sin^{2\nu}\phi d\phi$$
\end{proof}
The random motions obtained by setting $\nu=0$ and $\nu=1$ in the density law $f_\nu(\theta)$ have a special role in this paper. Indeed, for $X_0(t)$ and $X_1(t)$, we are able to explicit in closed form their characteristic and moment generating functions and successively the density laws.  In order to distinguish these important particular cases from the general random model $X_\nu(t)$, $\nu\geq 0$, we will indicate them in the rest of paper with $X_m(t),m=0,1$. Moreover, we will use the following notation: $v_0=\frac n2$ and $v_1=n+1$. 
\begin{corollary}\label{corcf}
For $X_m(t),\,m=0,1$, and $n\geq 1$, we have that
\begin{align}
&E\left\{e^{i\alpha X_m(t)}|N(t)=n\right\}=\frac{\Gamma\left(v_m+1\right)2^{v_m}}{(\alpha ct)^{v_m}}J_{v_m}(\alpha ct),\label{chfcondp}\\
&E\left\{e^{\beta X_m(t)}|N(t)=n\right\}=\frac{\Gamma\left(v_m+1\right)2^{v_m}}{(\beta ct)^{v_m}}I_{v_m}(\beta  ct).\label{genmom2}
\end{align}
\end{corollary}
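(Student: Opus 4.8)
The plan is to turn the iterated time–integral of Theorem~\ref{chf} into an $(n+1)$-fold convolution and evaluate it by a Laplace transform in $t$. Write $h_\nu(z):=(2/z)^\nu\Gamma(\nu+1)J_\nu(z)$ for the normalized Bessel factor occurring in \eqref{chfn}, and set $g(\tau):=h_\nu(\alpha c\,\tau)$. The change of variables $\tau_i=s_i-s_{i-1}$ (unit Jacobian, $\tau_{n+1}=t-s_n$) identifies
$$\int_0^t ds_1\cdots\int_{s_{n-1}}^t ds_n\prod_{j=1}^{n+1}h_\nu(\alpha c(s_j-s_{j-1}))=(g*\cdots*g)(t)$$
with $n+1$ convolution factors, so its Laplace transform in $t$ equals $\widehat g(p)^{\,n+1}$. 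By uniqueness of Laplace transforms, the characteristic-function part of Corollary~\ref{corcf} reduces to the single identity $\widehat g(p)^{\,n+1}=\mathcal L\big[\tfrac{t^n}{n!}h_{v_m}(\alpha ct)\big](p)$ for $m=0,1$, where $v_0=n/2$, $v_1=n+1$, and $h_{v_m}(\alpha ct)=\Gamma(v_m+1)2^{v_m}(\alpha ct)^{-v_m}J_{v_m}(\alpha ct)$ is precisely the right-hand side of \eqref{chfcondp}.

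Next I would compute $\widehat g$ for $\nu=0$ and $\nu=1$. Rewriting \eqref{irbess} as $h_\nu(at)=\tfrac{1}{B(\nu+\frac12,\frac12)}\int_{-1}^1 e^{iatu}(1-u^2)^{\nu-\frac12}\,du$ and integrating over $t$ termwise gives, with $a=\alpha c$, the formula $\widehat g(p)=\tfrac{p}{B(\nu+\frac12,\frac12)}\int_{-1}^1\tfrac{(1-u^2)^{\nu-\frac12}}{p^2+a^2u^2}\,du$ (the odd part drops out). The substitution $u=\sin\phi$ reduces this to elementary integrals and yields $\widehat g(p)=(p^2+a^2)^{-1/2}$ for $\nu=0$ and $\widehat g(p)=2a^{-2}\big(\sqrt{p^2+a^2}-p\big)$ for $\nu=1$.

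It then remains to match the $(n+1)$-th powers with the claimed right-hand sides. For $m=0$, $\widehat g(p)^{n+1}=(p^2+a^2)^{-(n+1)/2}$; combining $\mathcal L[t^{n/2}J_{n/2}(at)](p)=(2a)^{n/2}\Gamma(\tfrac{n+1}{2})/\big(\sqrt\pi\,(p^2+a^2)^{(n+1)/2}\big)$ with the Legendre duplication formula $\Gamma(\tfrac n2+1)\Gamma(\tfrac{n+1}{2})=2^{-n}\sqrt\pi\,n!$ shows that $\tfrac{t^n}{n!}h_{n/2}(\alpha ct)$ has exactly this transform, proving \eqref{chfcondp} for $m=0$. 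For $m=1$, $\widehat g(p)^{n+1}=2^{n+1}a^{-2(n+1)}\big(\sqrt{p^2+a^2}-p\big)^{n+1}$, and the classical transform $\mathcal L[J_\nu(at)/t](p)=\nu^{-1}a^{-\nu}\big(\sqrt{p^2+a^2}-p\big)^{\nu}$ with $\nu=n+1$ shows that $\tfrac{t^n}{n!}h_{n+1}(\alpha ct)$ matches, giving \eqref{chfcondp} for $m=1$. Finally \eqref{genmom2} follows verbatim after replacing $e^{iatu}$ by $e^{atu}$ in the representation of $h_\nu$: one gets $\widehat g(p)=\tfrac{p}{B(\nu+\frac12,\frac12)}\int_{-1}^1(1-u^2)^{\nu-\frac12}/(p^2-a^2u^2)\,du$ for $p>a=\beta c$, equal to $(p^2-a^2)^{-1/2}$ for $\nu=0$ and $2a^{-2}\big(p-\sqrt{p^2-a^2}\big)$ for $\nu=1$, and one concludes with $\mathcal L[t^{n/2}I_{n/2}(at)]$ and $\mathcal L[I_\nu(at)/t]$ in place of the Bessel-$J$ transforms.

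The conceptual content is small; the work lies in the bookkeeping of the last step — producing the two closed forms for $\widehat g$ and then cancelling the Gamma- and power-of-two factors, where the case $m=0$ genuinely needs the duplication formula and $m=1$ needs the (slightly less standard) Laplace transform of $J_\nu(t)/t$. An alternative would be an induction on $n$ based on a convolution lemma for the functions $h_\nu$, but the Laplace-transform route avoids isolating such a lemma and handles the characteristic-function and moment-generating-function statements uniformly.
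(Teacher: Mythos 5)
Your proposal is correct; I checked the two closed forms for $\widehat g$ (for $\nu=0$ the substitution $u=\sin\phi$ gives $\int_{-\pi/2}^{\pi/2}p\,d\phi/(p^2+a^2\sin^2\phi)=\pi/\sqrt{p^2+a^2}$, and for $\nu=1$ the partial-fraction split of $\cos^2\phi/(p^2+a^2\sin^2\phi)$ gives $\pi(\sqrt{p^2+a^2}-p)/a^2$, each divided by the right Beta constant), and the final matching via the duplication formula and via $\mathcal L[J_{n+1}(at)/t]$ both come out exactly. The route, however, is organized differently from the paper's. The paper proves the corollary by applying \emph{recursively} the two-factor convolution identities of Gradshteyn--Ryzhik, namely $\int_0^a x^{\mu}(a-x)^{\nu}J_\mu(x)J_\nu(a-x)\,dx=\frac{\Gamma(\mu+\frac12)\Gamma(\nu+\frac12)}{\sqrt{2\pi}\,\Gamma(\mu+\nu+1)}a^{\mu+\nu+\frac12}J_{\mu+\nu+\frac12}(a)$ for $m=0$ and $\int_0^a\frac{J_\mu(x)}{x}\frac{J_\nu(a-x)}{a-x}\,dx=(\frac1\mu+\frac1\nu)\frac{J_{\mu+\nu}(a)}{a}$ for $m=1$, collapsing the $(n+1)$ factors one pair at a time so that the index accumulates to $v_m$; for the moment generating function it first \emph{derives} the analogous identities for $I_\nu$ by exactly the Laplace-transform-of-a-convolution argument you use, then iterates them. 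You instead skip the two-factor lemma entirely, Laplace-transform the full $(n+1)$-fold convolution in one shot, and identify the result by uniqueness, at the price of needing the closed-form transforms of $t^{n/2}J_{n/2}(at)$ and $J_{n+1}(at)/t$ (and their $I$-analogues). Your version is shorter, treats \eqref{chfcondp} and \eqref{genmom2} uniformly, and avoids the induction bookkeeping; the paper's version makes the index-addition ``semigroup'' structure visible at each step and ties the computation directly to the random-flight calculations of Orsingher and De Gregorio it cites. This is the alternative you yourself flag in your closing remark, so the comparison is exactly as you anticipated.
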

\begin{proof}
We only give some sketches of the proof. Starting from \eqref{chfn}, we prove \eqref{chfcondp} for $m=0$.  It is possible to use the same approach used by Orsingher and De Gregorio (2007b), noticing that the $n$-fold integral 
$$\int_0^tds_1\cdots\int_{s_{n-1}}^tds_{n}\prod_{j=1}^{n+1}J_0(\alpha c(s_j-s_{j-1}))$$
can be worked out by applying recursively the following formula (see Gradshteyn and Ryzhik, 1980, formula 6.533(2))
\begin{equation}\label{formula}
\int_0^ax^{\mu}(a-x)^{\nu}J_\mu(x)J_\nu(a-x)dx=\frac{\Gamma(\mu+\frac12)\Gamma(\nu+\frac12)}{\sqrt{2\pi}\Gamma(\mu+\nu+1)}a^{\mu+\nu+\frac12}J_{\mu+\nu+\frac12}(a)
\end{equation}
with $Re(\mu)>-\frac12,\,Re(\nu)>-\frac12$.
Analogously, for $m=1$, by taking into account the formula (see Gradshteyn and Ryzhik, 1980, formula 6.581(3))
\begin{equation}\label{formula2}
\int_0^a\frac{J_\mu(x)}{x}\frac{J_\nu(a-x)}{a-x}dx=\left(\frac1\mu+\frac1\nu\right)\frac{J_{\mu+\nu}(a)}{a},
\end{equation}
with $Re(\mu)>0,\,Re(\nu)>0$,
it is possible to compute the exact value of the following quantity 
$$\int_0^tds_1\cdots\int_{s_{n-1}}^tds_{n}\prod_{j=1}^{n+1}\frac{J_1(\alpha c(s_j-s_{j-1}))}{\alpha c(s_j-s_{j-1})}.$$
Hence, the result \eqref{chfcondp} for $v_1=n+1$ emerges.

For the moment generating function, we need to prove that the following semigroup-type property holds
\begin{equation}\label{sem}
\int_0^ax^{\mu}(a-x)^{\nu}I_\mu(x)I_\nu(a-x)dx=\frac{\Gamma(\mu+\frac12)\Gamma(\nu+\frac12)}{\sqrt{2\pi}\Gamma(\mu+\nu+1)}a^{\mu+\nu+\frac12}I_{\mu+\nu+\frac12}(a)
\end{equation}
Indeed, since (see Gradshteyn and Ryzhik, 1980)
$$\int_0^\infty e^{\beta  x}x^\mu I_\mu(x)dx=\frac{2^\mu\Gamma(\mu+\frac12)}{\sqrt{\pi}(\beta^2-1)^{\mu+\frac12}},\quad Re(\mu)>-\frac12, $$
we have that
\begin{eqnarray*}
\int_0^\infty e^{\beta  a}da \int_0^ax^{\mu}(a-x)^{\nu}I_\mu(x)I_\nu(a-x)dx&=&\int_0^\infty e^{\beta  x}x^\mu I_\mu(x)dx\int_0^\infty e^{\beta  y}y^\nu I_\nu(y)dy\\
&=&2^{\mu+\nu}\pi^{-1}\frac{\Gamma\left(\mu+\frac12\right)\Gamma\left(\nu+\frac12\right)}{(\beta ^2-1)^{\mu+\nu+1}}\\
&=&\frac{\Gamma(\mu+\frac12)\Gamma(\nu+\frac12)}{\sqrt{2\pi}\Gamma(\mu+\nu+1)}\int_0^\infty e^{\beta  a}a^{\mu+\nu+\frac12}I_{\mu+\nu+\frac12}(a)da
\end{eqnarray*}
Furthermore, being (see Gradshteyn and Ryzhik, 1980)
$$\int_0^\infty e^{\beta  x}x^{-1} I_\mu(x)dx=\frac{\mu}{[\beta+(\beta^2-1)]^{\mu}},\quad Re(\mu)>0$$
with the similar steps used to obtain \eqref{sem}, it is possible to show that
\begin{equation}\label{sem2}
\int_0^a\frac{I_\mu(x)}{x}\frac{I_\nu(a-x)}{a-x}dx=\left(\frac{1}{\mu}+\frac{1}{\nu}\right)a^{-1}I_{\mu+\nu}(a)
\end{equation}
In conclusion, by means of \eqref{sem} and \eqref{sem2}, and the same considerations done for the proof of \eqref{chfcondp}, the proof \eqref{genmom2} of immediately follows.
\end{proof}
\begin{remark}\label{rem}
We are able to give an integral  representation of the unconditional characteristic function of $X_0(t)$. Indeed, we have that
\begin{eqnarray*}
E\left\{e^{i\alpha X_0(t)}\right\}&=&e^{-\lambda t}\sum_{n=0}^\infty\frac{(\lambda t)^n}{n!}\frac{\Gamma\left(\frac n2+1\right)2^{\frac n2}}{(\alpha ct)^{\frac n2}}J_{\frac n2}(\alpha ct)\\
&=&e^{-\lambda t}\sum_{n=0}^\infty\frac{(\lambda t)^n}{n!}\frac{\Gamma\left(\frac n2+1\right)}{\sqrt{\pi}\Gamma(\frac{n+1}{2})}\int_0^\pi e^{i\alpha ct\cos\theta}\sin^n\theta d\theta\\
&=&e^{-\lambda t}\sum_{n=0}^\infty\frac{(\lambda t)^n}{2^n\Gamma^2\left(\frac{n+1}{2}\right)}\int_0^\pi e^{i\alpha ct\cos\theta}\sin^n\theta d\theta\\
&=&e^{-\lambda t}\Bigg\{\sum_{m=0}^\infty\frac{(\lambda t)^{2m+1}}{2^{2m+1}\Gamma^2\left(m+1\right)}\int_0^\pi e^{i\alpha ct\cos\theta}\sin^{2m+1}\theta d\theta\\
&&+J_0(\alpha ct)+\sum_{m=0}^\infty\frac{(\lambda t)^{2m+2}}{2^{2m+2}\Gamma^2\left(m+\frac32\right)}\int_0^\pi e^{i\alpha ct\cos\theta}\sin^{2m+2}\theta d\theta\Bigg\}\\
&=&e^{-\lambda t}\left\{J_0(\alpha ct)+\frac{\lambda t}{2}\int_0^\pi e^{i\alpha ct\cos\theta}\left(I_0(\lambda t\sin\theta)+{\bf L}_0(\lambda t\sin\theta)\right)\sin\theta d\theta\right\}
\end{eqnarray*}
where ${\bf L}_\mu(x)=\sum_{k=0}^\infty\frac{(x/2)^{2k+\mu+1}}{\Gamma(k+\frac32)\Gamma(k+\mu+\frac32)}$ is the modified Struve function.

For $X_1(t)$ we get that
\begin{eqnarray*}
E\left\{e^{i\alpha X_1(t)}\right\}&=&e^{-\lambda t}\sum_{n=0}^\infty\frac{(\lambda t)^n}{n!}\frac{\Gamma\left(n+2\right)2^{n+1}}{(\alpha ct)^{n+1}}J_{n+1}(\alpha ct)\\
&=&e^{-\lambda t}\sum_{n=0}^\infty\frac{(\lambda t)^n}{\sqrt{\pi}}\frac{(n+\frac12-\frac12)+1}{\Gamma(n+\frac32)}\int_0^{\pi}e^{i\alpha ct\cos\theta}\sin^{2(n+1)}\theta d\theta\\
&=&\frac{e^{-\lambda t}}{\sqrt{\pi}}\int_0^{\pi}e^{i\alpha ct\cos\theta}\left\{E_{1,\frac12}(\lambda t\sin^2\theta)+\frac12E_{1,\frac32}(\lambda t\sin^2\theta)\right\} \sin^{2}\theta d\theta
\end{eqnarray*}
where $E_{\alpha,\beta}(x)=\sum_{k=0}^\infty\frac{x^k}{\Gamma(\alpha k+\beta)}$ is the Mittag-Leffler functon.
\end{remark}

As stated before $X_0(t)$ and $X_1(t)$ represent two important particular cases of the class of random motions $X_\nu(t)$. This is due to the fact that by means of \eqref{chfcondp} we derive their probability distributions.

\begin{theorem}\label{condlaw}
The following conditional density laws hold
\begin{align}\label{condp2}
&p_0^\nu(x,t)=\frac{P(X_\nu(t)\in dx|N(t)=0)}{dx}=\frac{\Gamma\left(\nu\right) \Gamma\left(\nu+1\right)}{2\pi\Gamma(2\nu)} 
\left(\frac{2}{ct}\right)^{2\nu}(c^2t^2-x^2)^{\nu-\frac{1}{2}}\\
\label{condp}
&p_n^m(x,t)=\frac{P(X_m(t)\in dx|N(t)=n)}{dx}=\frac{\Gamma\left(v_m\right) \Gamma\left(v_m+1\right)}{2\pi\Gamma(2v_m)} 
\left(\frac{2}{ct}\right)^{2v_m}(c^2t^2-x^2)^{v_m-\frac{1}{2}}
\end{align} 
with $|x|<ct$ and $m=0,1$.
\end{theorem}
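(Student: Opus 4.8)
The plan is to recover the densities by inverting the conditional characteristic functions established in Corollary \ref{corcf} and in \eqref{chf0}. The key tool is the Poisson-type integral \eqref{irbess}: with the substitution $u=\cos\phi$ (so that $\sin^{2\nu}\phi\,d\phi=-(1-u^2)^{\nu-\frac12}du$) it becomes
\begin{equation*}
J_\nu(z)=\frac{(z/2)^\nu}{\Gamma(\nu+\frac12)\Gamma(\frac12)}\int_{-1}^1 e^{izu}(1-u^2)^{\nu-\frac12}\,du,
\end{equation*}
which is valid whenever $Re(\nu)>-\frac12$, hence for every $\nu\geq0$ and, with $\nu$ replaced by $v_m$, for every $n\geq1$ (since $v_0=\frac n2\geq\frac12$ and $v_1=n+1\geq2$). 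Multiplying both sides by $2^\nu\Gamma(\nu+1)z^{-\nu}$ and putting $z=\alpha ct$, the right-hand side of \eqref{chf0} is seen to equal $\frac{\Gamma(\nu+1)}{\Gamma(\nu+\frac12)\Gamma(\frac12)}\int_{-1}^1 e^{i\alpha ctu}(1-u^2)^{\nu-\frac12}\,du$, and the same manipulation applied to \eqref{chfcondp} produces the analogous expression with $v_m$ in place of $\nu$.

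Next I would change the variable $x=ctu$ inside the integral, thereby recognising the right-hand side as the Fourier transform of the function $\frac{\Gamma(\nu+1)}{\Gamma(\nu+\frac12)\Gamma(\frac12)\,ct}(1-\frac{x^2}{c^2t^2})^{\nu-\frac12}\mathbf{1}_{(-ct,ct)}(x)$ (respectively with $v_m$ in place of $\nu$). Since this function is nonnegative, integrable and supported in $(-ct,ct)$, in agreement with $X_\nu(t)\in(-ct,ct)$, the uniqueness theorem for characteristic functions identifies it with the conditional density: $p_0^\nu(x,t)=\frac{\Gamma(\nu+1)}{\Gamma(\nu+\frac12)\Gamma(\frac12)\,ct}(1-\frac{x^2}{c^2t^2})^{\nu-\frac12}$ for $|x|<ct$, and $p_n^m(x,t)$ is given by the same formula with $\nu$ replaced by $v_m$.

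It only remains to bring the normalizing constant into the announced shape. Writing $(1-\frac{x^2}{c^2t^2})^{\nu-\frac12}=(ct)^{1-2\nu}(c^2t^2-x^2)^{\nu-\frac12}$ and using $\Gamma(\frac12)=\sqrt\pi$ together with the Legendre duplication formula $\Gamma(2\nu)=\frac{2^{2\nu-1}}{\sqrt\pi}\Gamma(\nu)\Gamma(\nu+\frac12)$, one checks the identity $\frac{1}{\Gamma(\nu+\frac12)\Gamma(\frac12)}=\frac{2^{2\nu}\Gamma(\nu)}{2\pi\Gamma(2\nu)}$, which turns the density into $\frac{\Gamma(\nu)\Gamma(\nu+1)}{2\pi\Gamma(2\nu)}\left(\frac{2}{ct}\right)^{2\nu}(c^2t^2-x^2)^{\nu-\frac12}$, that is \eqref{condp2}; the identical computation with $v_m$ yields \eqref{condp}.

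No serious obstacle is expected: once \eqref{irbess} is recast as a Fourier integral the inversion is immediate, and the only delicate point is the bookkeeping of the Gamma-function constants, i.e.\ the verification via the duplication formula that the two equivalent normalizations agree. As a sanity check one may note that $\Gamma(\nu)/\Gamma(2\nu)\to 2$ as $\nu\downarrow 0$, so that $p_0^0(x,t)=\frac{1}{\pi\sqrt{c^2t^2-x^2}}$, which is exactly the arcsine-type law of $X_0(t)=ct\cos\theta_1$ in the absence of Poisson events.
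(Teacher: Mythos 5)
Your proof is correct. Logically it follows the same verification strategy as the paper --- exhibit a candidate density whose characteristic function is \eqref{chfcondp} (resp.\ \eqref{chf0}) and invoke uniqueness --- but the mechanics are genuinely different and shorter. The paper expands $e^{i\alpha x}$ in a power series against the candidate density, evaluates each term as a Beta integral, and resums into $J_{v_m}$ by applying the duplication formula inside the sum; you instead observe that the substitution $u=\cos\phi$ turns the Poisson representation \eqref{irbess} (already used to prove Theorem \ref{chf}) directly into the Fourier transform of $(1-u^2)^{\nu-\frac12}$ on $(-1,1)$, so that \eqref{chfcondp} is manifestly the characteristic function of the announced law after a single application of the duplication formula to the normalizing constant. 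Your route makes the origin of the density transparent (it is just the Poisson kernel of the Bessel function rescaled to $(-ct,ct)$) and avoids all series manipulation; the paper's route is a self-contained term-by-term check that does not presuppose recognizing \eqref{irbess} as a Fourier integral. Two minor points worth recording explicitly: setting $\alpha=0$ in \eqref{chfcondp} shows your candidate integrates to $1$, which is what licenses the appeal to uniqueness of characteristic functions of probability measures; and the validity of the Poisson representation for $Re(\nu)>-\tfrac12$ (rather than the condition $Re(\nu)>0$ stated after \eqref{irbess}) is indeed what covers the case $\nu=0$, where \eqref{condp2} must be read as the limit $\Gamma(\nu)/\Gamma(2\nu)\to 2$, exactly as in your arcsine sanity check.
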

\begin{proof} We only prove \eqref{condp} for $m=0$, because the other results follow by means of similar steps. Instead of inverting \eqref{chfcondp}, we show that the characteristic function of the probability distribution $p_n^0(x,t)$ corresponds to \eqref{chfcondp} for $m=0$. Therefore, we have that 
\begin{eqnarray*}
E\left\{e^{i\alpha X_0(t)}|N(t)=n\right\}&=&\frac{\Gamma\left(\frac n2\right) \Gamma\left(\frac n2+1\right)}{2\pi\Gamma(n)} 
\left(\frac{2}{ct}\right)^n \int_{-ct}^{ct} e^{i\alpha x}(c^2t^2-x^2)^{\frac{n-1}{2}}dx\\
&=&\frac{\Gamma\left(\frac n2\right) \Gamma\left(\frac n2+1\right)}{\pi\Gamma(n)} 
\left(\frac{2}{ct}\right)^n \int_0^{ct} \cos(\alpha x)(c^2t^2-x^2)^{\frac{n-1}{2}}dx\\
&=&\frac{\Gamma\left(\frac n2\right) \Gamma\left(\frac n2+1\right)}{\pi\Gamma(n)} 
\left(\frac{2}{ct}\right)^n\sum_{k=0}^\infty (-1)^k\frac{\alpha^{2k}}{(2k)!} \int_0^{ct} x^{2k}(c^2t^2-x^2)^{\frac{n-1}{2}}dx\\
&=&\frac{\Gamma\left(\frac n2\right) \Gamma\left(\frac n2+1\right)2^{n-1}}{\pi\Gamma(n)} \sum_{k=0}^\infty (-1)^k\frac{(\alpha ct)^{2k}}{(2k)!} \int_0^1y^{k-\frac12}(1-y)^{\frac{n-1}{2}}dy\\
&=&\frac{\Gamma\left(\frac n2\right) \Gamma\left(\frac n2+1\right)\Gamma\left(\frac {n+1}{2}\right)2^{n-1}}{\pi\Gamma(n)} \sum_{k=0}^\infty (-1)^k\frac{(\alpha ct)^{2k}}{(2k)!}\frac{\Gamma\left(k+\frac12\right)}{\Gamma\left(k+\frac {n}{2}+1\right)}\\
&=&\frac{\Gamma\left(\frac n2\right) \Gamma\left(\frac n2+1\right)\Gamma\left(n\right)\sqrt{\pi}}{\pi\Gamma(n) \Gamma\left(\frac n2\right)}\sum_{k=0}^\infty (-1)^k\frac{(\alpha ct)^{2k}}{(2k)!}\frac{\Gamma\left(2k\right)\sqrt{\pi}2^{1-2k}}{\Gamma\left(k\right)\Gamma\left(k+\frac {n}{2}+1\right)}\\
&=&\Gamma\left(\frac n2+1\right)\sum_{k=0}^\infty (-1)^k\left(\frac{\alpha ct}{2}\right)^{2k}\frac{1}{\Gamma\left(k+1\right)\Gamma\left(k+\frac {n}{2}+1\right)}\\
&=&\frac{\Gamma\left(\frac n2+1\right)2^{\frac n2}}{(\alpha ct)^{\frac n2}}J_{\frac n2}(\alpha ct).
\end{eqnarray*}
\end{proof}
\begin{remark}\label{rem}
Theorem \ref{condlaw} permits us to point out the existing connection between $X_0(t)$, $X_1(t)$ and the random flights. Indeed, by setting $\nu=0$ in $f_\nu(\theta)$, we reobtain the uniform distribution on a unit semicircle and (as expected) $X_0(t)$ represents the projection onto the $x$-axis of a planar random flight. Then, $$p_n^0(x,t)=\frac{\Gamma\left(\frac n2\right) \Gamma\left(\frac n2+1\right)}{2\pi\Gamma(n)} 
\left(\frac{2}{ct}\right)^{n}(c^2t^2-x^2)^{\frac{n-1}{2}}$$ corresponds to the distribution (4.4) obtained Orsingher and De Gregorio (2007b).  For $\nu=1$, the probability law 
$$p_n^1(x,t)=\frac{\Gamma\left(n+1\right) \Gamma\left(n+2\right)}{2\pi\Gamma(2n+2)} 
\left(\frac{2}{ct}\right)^{2n+2}(c^2t^2-x^2)^{n+\frac{1}{2}}$$
is the same of the one obtained by means of the projection of a random flight in $\mathbb{R}^4$ onto the real line (see (4.1c) in Orsingher and De Gregorio, 2007b). In other words, the shadow on $\mathbb{R}$ of a four-dimensional random flight is perceived by an observer located on the real line, as a slowed down motion. Therefore, in distribution, we have the following equality
$$X_1(t)\stackrel{d}{=}c\sum_{j=1}^{N(t)+1}(s_j-s_{j-1})\sin\theta_{1,j}\sin\theta_{2,j}\sin\phi$$
where $(\theta_{1,j}\theta_{2,j},\phi)$ is uniformly distributed on the four-dimensional hypersphere.
\end{remark}
\begin{remark}
Theorem \ref{condlaw} says us that $p_0^\nu(x,t)=p_{n}^m(x,t)$ if and only if $\nu=v_m$. This means that$X_0(t)$ (absence of inertia) and $X_1(t)$ is equivalent in distribution to a random model representing a particle slowly moving with the same speed  until $t$.  
\end{remark}

The unconditional density laws of $X_0(t)$ and $X_1(t)$ are given by the following expressions\begin{eqnarray}\label{law}
p^0(x,t)&=&\frac{\lambda e^{-\lambda t}}{2c}\sum_{k=0}^{\infty}\left(\frac{\lambda}{2c}\sqrt{c^2t^2-x^2}\right)^{k-1}\frac{1}{\Gamma^2(\frac{k+1}{2})}\\
&=&\frac{\lambda e^{-\lambda t}}{2c}\left[I_0\left(\frac\lambda c\sqrt{c^2t^2-x^2}\right)+{\bf L}_0\left(\frac\lambda c\sqrt{c^2t^2-x^2}\right)\right]+\frac{e^{-\lambda t}}{\pi\sqrt{c^2t^2-x^2}}\notag
\end{eqnarray} 
\begin{eqnarray}\label{law2}
p^1(x;t)&=&\frac{e^{-\lambda t}}{c\sqrt{\lambda \pi t^3}}\sum_{k=0}^{\infty}\left(\frac{\lambda}{c^2 t}(c^2t^2-x^2)\right)^{k+\frac12}\frac{k+1}{\Gamma(k+\frac{3}{2})}\\
&=&\frac{e^{-\lambda t}\sqrt{c^2t^2-x^2}}{(ct)^2\sqrt{\pi}}\left\{E_{1,\frac12}\left(\frac{\lambda}{c^2t}(c^2t^2-x^2)\right)+\frac12E_{1,\frac32}\left(\frac{\lambda}{c^2t}(c^2t^2-x^2)\right)\right\}\notag
\end{eqnarray}
with $|x|< ct$.

We point out that the first term in \eqref{law}, that is $I_0\left(\frac\lambda c\sqrt{c^2t^2-x^2}\right)$, is equal to the one presents in the absolutely continuous component of the law of a telegraph process (see Orsingher, 1990), that is
$$\frac{e^{-\lambda t}}{2c}\left[\lambda I_0\left(\frac\lambda c\sqrt{c^2t^2-x^2}\right)+\frac{\partial}{\partial t}I_0\left(\frac\lambda c\sqrt{c^2t^2-x^2}\right)\right],$$
while the derivative with respect to the time of the Bessel function is replaced by the modified Struve function ${\bf L}_0\left(\frac\lambda c\sqrt{c^2t^2-x^2}\right)$ (up the constant $\lambda$).

\section{On the moments and some relationships with random motions on hyperbolic spaces}
In this section we analyze the moments of the random motion $X_\nu(t)$. In particular,
we are able to provide the first two moments of $X_\nu(t)$ by applying the results contained in Section 3 of Stadje and Zacks (2004). Therefore, fixed $n\geq 0$, for the mean value one has
\begin{eqnarray*}
E\left\{X_\nu(t)|N(t)=n\right\}&=&ct E\left\{\cos\theta\right\}\\
&=&\frac{ct\Gamma(\nu+1)}{\sqrt{\pi}\Gamma(\nu+\frac12)}\int_{0}^\pi \cos\theta \sin^{2\nu}\theta d\theta\\
&=&\frac{ct\Gamma(\nu+1)}{\sqrt{\pi}\Gamma(\nu+\frac12)}\left\{\int_{0}^{\frac\pi2} \cos\theta \sin^{2\nu}\theta d\theta-\int_{0}^{\frac\pi2}\sin\theta\cos^{2\nu}\theta d\theta\right\}\\
&=&0
\end{eqnarray*}
where in the last step we have used the well-known integral
 \begin{equation}\label{wallis}
\int_{0}^{\frac\pi2}\sin^a\theta\cos^{b}\theta d\theta=\frac12\frac{\Gamma(\frac{a+1}{2})\Gamma(\frac{b+1}{2})}{\Gamma(\frac{a+b}{2}+1)},\quad Re(a)>-1,\,Re(b)>-1.
\end{equation}

\begin{remark}
The mean value can also be derived from Theorem \ref{chf} (or equivalently from Theorem \ref{mg}). It is clear that
$$\frac{J_\nu(\alpha c(s_j-s_{j-1}))}{(\alpha c(s_j-s_{j-1}))^\nu}\Big|_{\alpha=0}=\frac{1}{2^\nu\Gamma(\nu+1)}$$
while it is not hard to prove that
$$\frac{d}{d\alpha}\frac{J_\nu(\alpha c(s_j-s_{j-1}))}{(\alpha c(s_j-s_{j-1}))^\nu}=\sum_{k=0}^\infty \frac{k}{k!\Gamma(k+\nu+1)}\left(\frac{\alpha c(s_j-s_{j-1})}{2}\right)^{2k+\nu-1}\frac{c(s_j-s_{j-1})}{(\alpha c(s_j-s_{j-1}))^{\nu}}$$
which calculated at $\alpha=0$ is equal to 0.
Then
\begin{equation}\label{mom1}
E\left\{X_\nu(t)|N(t)=n\right\}=i^{-1}\frac{d}{d\alpha}E\left\{e^{i\alpha X_\nu(t)}|N(t)=n\right\}\Big|_{\alpha=0}=0
\end{equation}

\end{remark}
For the the second moment we have that
\begin{eqnarray*}
E\left\{X_\nu^2(t)|N(t)=n\right\}&=&\frac{2}{n+2}c^2t^2E\{\cos^2\theta\}\\
&=&\frac{2}{n+2}c^2t^2\frac{\Gamma(\nu+1)}{\sqrt{\pi}\Gamma(\nu+\frac12)}\int_0^\pi\cos^2\theta\sin^{2\nu}\theta d\theta\\
&=&\frac{2}{n+2}c^2t^2\frac{\Gamma(\nu+1)\Gamma(\frac32)}{\sqrt{\pi}\Gamma(\nu+2)}\\
&=&\frac{c^2t^2}{(\nu+1)(n+2)}
\end{eqnarray*}
and after some calculations
\begin{eqnarray}\label{mom2}
E\left\{X_\nu^2(t)\right\}&=&e^{-\lambda t}\frac{(ct)^2}{(\nu+1)}\sum_{n=0}^\infty\frac{(\lambda t)^n}{n!(n+2)}\notag\\
&=&e^{-\lambda t}\frac{(ct)^2}{(\nu+1)}\left\{E_{1,2}(\lambda t)-E_{1,3}(\lambda t)\right\}\notag\\
&=&\frac{(ct)^2}{(\nu+1)}\frac{1}{\lambda t}\left(1-\frac{1-e^{-\lambda t}}{\lambda t}\right)
\end{eqnarray}
where in the last step we have used the following relationships: $E_{1,2}(x)=\frac{e^x-1}{x}$ and $E_{1,3}=\frac{e^x-1-x}{x^2}$. As expected, if $\nu$ increases the action of the friction is stronger and the value of $E\left\{X_\nu^2(t)\right\}$ tends to decrease. Indeed, for growing values of $\nu$ the particle maintains itself close the starting point, so that the probability distribution of $X_\nu(t)$ will be less sparse.

Further, it is not difficult  to show that
\begin{equation}
E\{X_\nu^p(t)|N(t)=0\}=\int_{-ct}^{ct}x^pp_0^\nu(x,t)dx=\begin{cases}
     0 & \text{$p$ is odd }, \\
     \frac{\Gamma(\nu+1)\Gamma(\frac{p+1}{2})}{\sqrt{\pi}\Gamma(\frac p2+\nu+1)}(ct)^p& \text{$p$ is even}.
\end{cases}
\end{equation}
For $\nu=0,1$, we present the following result. 
\begin{theorem}\label{momp}
The $p$-th moment of $X_0(t)$ and $X_1(t)$ are respectively given by
\begin{align}
&E\{X_0^p(t)\}=e^{-\lambda t}\left(\frac{2}{\lambda t}\right)^{\frac{p-1}{2}}(ct)^{p}\Gamma\left(\frac{p+1}{2}\right)\left\{I_{\frac{p+1}{2}}(\lambda t)+{\bf L}_{\frac{p+1}{2}}(\lambda t)\right\}+\frac{e^{-\lambda t}(ct)^p\Gamma\left(\frac{p+1}{2}\right)}{\sqrt{\pi}\Gamma(\frac{p}{2}+1)}\label{mom0}\\
&E\{X_1^p(t)\}=\frac{e^{-\lambda t}}{\sqrt{\pi}}\Gamma\left(\frac{p+1}{2}\right)(ct)^p\left\{E_{1,\frac p2+1}(\lambda t)-\frac p2E_{1,\frac p2+2}(\lambda t)\right\}\label{mom1}
\end{align}
for $p$ even, whilst $E\{X_0^p(t)\}=E\{X_1^p(t)\}=0$ if $p$ is odd.
\end{theorem}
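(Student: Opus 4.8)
The plan is to compute the conditional $p$-th moment $E\{X_m^p(t)\mid N(t)=n\}$ in closed form and then average it against the Poisson law of $N(t)$. For the conditional moment I would start from the density $p_n^m(x,t)$ of Theorem~\ref{condlaw}. Since $p_n^m(\cdot,t)$ is even in $x$, all odd moments vanish; for even $p$ the substitution $x^2=c^2t^2y$ turns $\int_{-ct}^{ct}x^p(c^2t^2-x^2)^{v_m-1/2}\,dx$ into $\tfrac12(ct)^{p+2v_m}B\!\left(\tfrac{p+1}{2},v_m+\tfrac12\right)$. Plugging this into $p_n^m$ and simplifying the $\Gamma$-constants via the Legendre duplication formula $\Gamma(2v_m)=\pi^{-1/2}2^{2v_m-1}\Gamma(v_m)\Gamma(v_m+\tfrac12)$ should collapse everything to
$$E\{X_m^p(t)\mid N(t)=n\}=\frac{\Gamma\!\left(\frac{p+1}{2}\right)\Gamma(v_m+1)}{\sqrt{\pi}\,\Gamma\!\left(\frac p2+v_m+1\right)}(ct)^p\qquad(p\ \text{even}),$$
which for $n=0$ (so $v_0=0$, $v_1=1$) agrees with the formula for $E\{X_\nu^p(t)\mid N(t)=0\}$ already established before the statement, with $\nu$ replaced by $m$; hence the same expression holds for every $n\ge0$. (Equivalently, one may read it off from the power series in $\alpha$ of $\alpha\mapsto\Gamma(v_m+1)2^{v_m}(\alpha ct)^{-v_m}J_{v_m}(\alpha ct)$ in Corollary~\ref{corcf}, whose $\alpha^{2k}$-coefficient gives the $2k$-th moment.) In particular, for odd $p$ there is nothing left: $E\{X_m^p(t)\}=e^{-\lambda t}\sum_{n\ge0}\frac{(\lambda t)^n}{n!}\cdot0=0$.

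For $m=1$ and $p$ even I would substitute $v_1=n+1$, so that $\Gamma(v_1+1)/n!=(n+1)!/n!=n+1$ and
$$E\{X_1^p(t)\}=\frac{e^{-\lambda t}\,\Gamma\!\left(\frac{p+1}{2}\right)(ct)^p}{\sqrt{\pi}}\sum_{n\ge0}(\lambda t)^n\,\frac{n+1}{\Gamma\!\left(\frac p2+n+2\right)}.$$
Writing $n+1=\bigl(n+\tfrac p2+1\bigr)-\tfrac p2$ and using $\bigl(n+\tfrac p2+1\bigr)/\Gamma\bigl(n+\tfrac p2+2\bigr)=1/\Gamma\bigl(n+\tfrac p2+1\bigr)$ splits the series into $E_{1,\frac p2+1}(\lambda t)-\tfrac p2\,E_{1,\frac p2+2}(\lambda t)$, which is \eqref{mom1}.

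For $m=0$ and $p$ even I would use $v_0=n/2$ together with the duplication formula in the form $\Gamma(\tfrac n2+1)/n!=\sqrt{\pi}\,2^{-n}/\Gamma(\tfrac{n+1}{2})$, giving
$$E\{X_0^p(t)\}=e^{-\lambda t}\,\Gamma\!\left(\tfrac{p+1}{2}\right)(ct)^p\sum_{n\ge0}\frac{(\lambda t)^n}{2^{n}\,\Gamma\!\left(\frac{n+1}{2}\right)\Gamma\!\left(\frac p2+\frac n2+1\right)}.$$
Then I would split the sum by the parity of $n$: the terms with $n=2k+1$ reassemble, after factoring out $(2/\lambda t)^{(p-1)/2}$, into the modified Bessel series $I_{(p+1)/2}(\lambda t)$; the terms with $n=2k$, $k\ge1$, after the index shift $k\mapsto k+1$ matching the definition of ${\bf L}_\mu$, into the modified Struve series ${\bf L}_{(p+1)/2}(\lambda t)$ (again with prefactor $(2/\lambda t)^{(p-1)/2}$); and the $n=0$ term contributes $1/\bigl(\sqrt{\pi}\,\Gamma(\tfrac p2+1)\bigr)$. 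Collecting these three pieces yields \eqref{mom0}.

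The Beta-integral and duplication-formula manipulations are routine; the only genuinely delicate step is the last paragraph, namely matching the even and odd pieces of the Poissonised series to the series definitions of $I_\mu$ and ${\bf L}_\mu$ — getting the index shift in the Struve part right and isolating the common factor $(2/\lambda t)^{(p-1)/2}$ exactly.
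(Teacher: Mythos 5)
Your proposal is correct and follows essentially the same route as the paper: the paper integrates $x^p$ against the series form of the unconditional densities \eqref{law}--\eqref{law2} and lands on exactly the sum you obtain after Poissonizing the conditional moments (the paper's intermediate step \eqref{intstep} is your $m=0$ series), then performs the identical parity split into $I_{(p+1)/2}$ and ${\bf L}_{(p+1)/2}$ plus the $n=0$ term, and the same $n+1=\left(n+\frac p2+1\right)-\frac p2$ Mittag--Leffler decomposition for $m=1$. (One cosmetic slip: your Beta-integral value $\tfrac12(ct)^{p+2v_m}B\left(\tfrac{p+1}{2},v_m+\tfrac12\right)$ is that of $\int_0^{ct}$, not of $\int_{-ct}^{ct}$, but the conditional moment $\Gamma\left(\tfrac{p+1}{2}\right)\Gamma(v_m+1)(ct)^p/\left(\sqrt{\pi}\,\Gamma\left(\tfrac p2+v_m+1\right)\right)$ you then state is nevertheless the correct one, so nothing downstream is affected.)
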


\begin{proof} Let $p$ be even, we get that
\begin{eqnarray}
E\{X_0^p(t)\}&=&\frac{\lambda e^{-\lambda t}}{c}\sum_{k=0}^\infty\left(\frac{\lambda}{2c}\right)^{k-1}\frac{1}{\Gamma^2(\frac{k+1}{2})}\int_0^{ct}x^p(c^2t^2-x^2)^{\frac{k-1}{2}}dx\notag\\
&=&(x=ct\sqrt{y})\notag\\
&=&\frac{\lambda e^{-\lambda t}}{2c}\sum_{k=0}^\infty\left(\frac{\lambda}{2c}\right)^{k-1}\frac{(ct)^{p+k}}{\Gamma^2(\frac{k+1}{2})}\int_0^1y^{\frac{p+1}{2}-1}(1-y)^{\frac{k+1}{2}-1}dy\notag\\
&=&\frac{\lambda e^{-\lambda t}}{2c}(ct)^p\Gamma\left(\frac{p+1}{2}\right)\sum_{k=0}^\infty\left(\frac{\lambda}{2c}\right)^{k-1}\frac{(ct)^k}{\Gamma(\frac{k+1}{2})\Gamma(\frac{k+p}{2}+1)}\label{intstep}
\end{eqnarray}
Now, we splitting the above sum in order to carry out separately the even  and the odd elements. 
\begin{eqnarray*}
E\{X_0^p(t)\}&=&\frac{\lambda e^{-\lambda t}}{2c}(ct)^p\Gamma\left(\frac{p+1}{2}\right)\Bigg\{\sum_{k=0}^\infty\left(\frac{\lambda}{2c}\right)^{2k}\frac{(ct)^{2k+1}}{\Gamma(k+1)\Gamma(k+\frac{p+1}{2}+1)}\\
&&+\sum_{k=0}^\infty\left(\frac{\lambda}{2c}\right)^{2k-1}\frac{(ct)^{2k}}{\Gamma(k+\frac12)\Gamma(k+\frac{p}{2}+1)}\Bigg\}\\
&=&\frac{\lambda e^{-\lambda t}}{2c}(ct)^p\Gamma\left(\frac{p+1}{2}\right)\Bigg\{ct\sum_{k=0}^\infty\left(\frac{\lambda t}{2}\right)^{2k}\frac{1}{\Gamma(k+1)\Gamma(k+\frac{p+1}{2}+1)}\\
&&+\left(\frac{\lambda}{2c}\right)^{-1}\frac{1}{\sqrt{\pi}\Gamma(\frac{p}{2}+1)}+\left(\frac{\lambda}{2c}\right)^{-1}\sum_{k=1}^\infty\left(\frac{\lambda t}{2}\right)^{2k}\frac{1}{\Gamma(k+\frac12)\Gamma(k+\frac{p}{2}+1)}\Bigg\}\\
&=&\frac{\lambda e^{-\lambda t}}{2c}(ct)^p\Gamma\left(\frac{p+1}{2}\right)\Bigg\{ct\sum_{k=0}^\infty\left(\frac{\lambda t}{2}\right)^{2k}\frac{1}{\Gamma(k+1)\Gamma(k+\frac{p+1}{2}+1)}\\
&&+\left(\frac{\lambda}{2c}\right)^{-1}\frac{1}{\sqrt{\pi}\Gamma(\frac{p}{2}+1)}+\left(\frac{\lambda}{2c}\right)^{-1}\sum_{k=0}^\infty\left(\frac{\lambda t}{2}\right)^{2k+2}\frac{1}{\Gamma(k+\frac32)\Gamma(k+\frac{p+1}{2}+\frac32)}\Bigg\}\\
&=&\frac{\lambda e^{-\lambda t}}{2c}(ct)^p\Gamma\left(\frac{p+1}{2}\right)\left\{ct\left(\frac{\lambda t}{2}\right)^{-\frac{p+1}{2}}(I_{\frac{p+1}{2}}(\lambda t)+{\bf L}_{\frac{p+1}{2}}(\lambda t))+\left(\frac{\lambda}{2c}\right)^{-1}\frac{1}{\sqrt{\pi}\Gamma(\frac{p}{2}+1)}\right\}.
\end{eqnarray*}
For $X_1(t)$ one has that
\begin{eqnarray*}
E\{X_1^p(t)\}&=&\frac{2e^{-\lambda t}}{c\sqrt{\lambda \pi t^3}}\sum_{k=0}^{\infty}\left(\frac{\lambda}{c^2 t}\right)^{k+\frac12}\frac{k+1}{\Gamma(k+\frac{3}{2})}\int_0^{ct}x^p(c^2t^2-x^2)^{k+\frac12}dx\\
&=&(x=ct\sqrt{y})\\
&=&\frac{e^{-\lambda t}}{c\sqrt{\lambda \pi t^3}}\sum_{k=0}^{\infty}\left(\frac{\lambda}{c^2 t}\right)^{k+\frac12}\frac{k+1}{\Gamma(k+\frac{3}{2})}(ct)^{p+2k+2}\int_0^1y^{\frac{p-1}{2}}(1-y)^{k+\frac12}dy\\
&=&\frac{e^{-\lambda t}}{c\sqrt{\lambda \pi t^3}}\left(\frac{\lambda}{c^2 t}\right)^{\frac12}\Gamma\left(\frac{p+1}{2}\right)(ct)^{p+2}\sum_{k=0}^{\infty}\left(\lambda t\right)^{k}\frac{k+1}{\Gamma(k+\frac{p}{2}+2)}\\
&=&\frac{e^{-\lambda t}}{\sqrt{\pi}}\Gamma\left(\frac{p+1}{2}\right)(ct)^p\sum_{k=0}^{\infty}\left(\lambda t\right)^{k}\frac{k+\frac p2+1-\frac p2}{\Gamma(k+\frac{p}{2}+2)}\\
&=&\frac{e^{-\lambda t}}{\sqrt{\pi}}\Gamma\left(\frac{p+1}{2}\right)(ct)^p\left\{E_{1,\frac p2+1}(\lambda t)-\frac p2E_{1,\frac p2+2}(\lambda t)\right\}
\end{eqnarray*}
If $p$ is odd, immediately follows that $E\{X_0^p(t)\}=E\{X_1^p(t)\}=0$. 
\end{proof}
\begin{remark}
From \eqref{mom1} we immediately reobtain the expression \eqref{mom2} for $\nu=1$, by setting $p=2$. For $E\{X_0^2(t)\}$, starting from \eqref{intstep} and using the duplication formula, one has that
\begin{eqnarray*}
E\{X_0^2(t)\}&=&e^{-\lambda t}(ct)^2\sum_{k=0}^\infty\frac{(\lambda t)^k}{k!(k+2)}\\
&=&\frac{e^{-\lambda t}}{2c}(ct)^2\Gamma\left(\frac 32\right)\sum_{k=0}^\infty \left(\frac{\lambda}{2c}\right)^{k-1}\frac{(ct)^k}{\Gamma(\frac{k+1}{2})\Gamma(\frac k2+2)}\\
&=&\frac{(ct)^2}{2}\sum_{k=0}^\infty\frac{(\lambda t)^k}{\Gamma(k+1)}\frac{\Gamma(\frac k2+1)}{\Gamma(\frac k2+2)}\\
&=&(ct)^2\sum_{k=0}^\infty\frac{(\lambda t)^k}{\Gamma(k+1)(k+2)}\\
&=&(ct)^2\sum_{k=0}^\infty\frac{(\lambda t)^k(k+2-1)}{\Gamma(k+1)(k+1)(k+2)}\\
&=&e^{-\lambda t}(ct)^2\left\{E_{1,2}(\lambda t)-E_{1,3}(\lambda t)\right\}
\end{eqnarray*}
corresponding to \eqref{mom2} for $\nu=0$.
\end{remark}
\begin{remark}
Since, for $x> 0$ and $\mu\geq 0$, $I_\mu(x)$, ${\bf L}_{\mu}(x)$ and $E_{1,\mu}$ are monotone increasing functions as $x\to\infty$, the following approximations hold as $x\to 0$
\begin{align*}
&I_\mu(x)\sim \frac{x^\mu}{2^\mu\Gamma(\mu+1)},\\
&{\bf L}_{\mu}(x)\sim0,\\
&E_{1,\beta}(x)\sim\frac{1}{\Gamma(\beta)}.
\end{align*}
Therefore, for $\lambda t \to 0$, we have that
\begin{align*}
&E\{X_0^p(t)\}\sim e^{-\lambda t}(ct)^{p}\left\{\frac{\lambda t}{2}+\frac{\Gamma(\frac{p+1}{2})}{\sqrt{\pi}\Gamma(\frac{p}{2}+1)}\right\}\\
&E\{X_1^p(t)\} \sim \frac{e^{-\lambda t}}{\sqrt{\pi}}\Gamma\left(\frac{p+1}{2}\right)(ct)^p\frac{1}{\Gamma(\frac p2+2)}
\end{align*}
\end{remark}
We conclude this Section discussing some connections between the previous results and the random motions moving on a Non-Euclidean plane. The upper half-plane $H_2^+=\{(x,y):y>0,x\in \mathbb{R}\}$ endowed with the metric
$$\frac{\sqrt{dx^2+dy^2}}{y}$$
is a model of Non-Euclidean (hyperbolic) space. The geodesic curves in this space are eiher the vertical half lines or half-circles whose centers lie on the $x$-axis. Similarly to Orsingher and De Gregorio (2007a), we consider a motion 
\begin{equation}
Y_\nu(t)=e^{X_\nu(t)}
\end{equation}
developing on the $y$-axis of the space $H_2^+$, starting from the origin $O=(0,1)$ at time $t=0$. The probability distribution of $Y_m(t)$ with $m=0,1,$ is equal to
\begin{equation*}
p^m(\log y,t)\frac 1y,\quad y>0
\end{equation*}
and by means of Corollary \ref{corcf}, the conditional mean values of $Y_0(t)$ and $Y_1(t)$ become
$$E\left\{Y_0(t)| N(t)=n\right\}=E\left\{e^{\beta X_0(t)}|N(t)=n\right\}\Big|_{\beta=1}=\frac{\Gamma\left(\frac n2+1\right)2^{\frac n2}}{(ct)^{\frac n2}}I_{\frac n2}( ct)
$$
$$E\left\{Y_1(t)| N(t)=n\right\}=E\left\{e^{\beta X_1(t)}|N(t)=n\right\}\Big|_{\beta=1}=\frac{2^{n+1}\Gamma(n+2)}{(ct)^{n+1}}I_{n+1}(  ct)$$
Furthermore, by considering at time $t$ the hyperbolic distance $\eta_\nu(t)$ from the origin $O$ of $Y_\nu(t)$, we have that (see Orsingher and De Gregorio, 2007a)
\begin{equation}
\eta_\nu(t)=\int_{\min(1,Y_\nu(t))}^{\max(1,Y_\nu(t))}\frac{dy}{y}=|X_\nu(t)|
\end{equation}
We are able to obtain a lower bound for the distribution function of $\eta_\nu(t)$. Indeed, we get that
\begin{eqnarray}
P(\eta_\nu(t)<\eta)&=&P(|X_\nu(t)|<\eta)\notag\\
&\geqslant&1-\frac{1}{\eta^2}E\{X^2_\nu(t)\}\notag
\end{eqnarray}
where $E\{X^2_\nu(t)\}$ is given by the formula \eqref{mom2}. Clearly, for $\nu=0,1,$ we obtain the exact expression of the distribution function of the hyperbolic distance, namely 
$$P(\eta_m(t)<\eta)=2\int_0^{\eta}p^m(x,t)dx$$
with $0<\eta\leq ct$ and $m=0,1$.

\section{Randomly varying time stochastic motions}\label{sec:comp}
So far, we have analyzed a random motion $X_\nu(t)$ evolving up to no-random time $t>0$, deriving its exact probability distribution in two particular cases. In this Section, we focus our attention on the random motion $X_m(t),\,m=0,1$, defined as in \eqref{gentel}, with randomly varying time. 

In order to develop our analysis , we take into account families of random times which include some well-known random variables. In particular, we consider Bessel and Gamma processes as random times. Our choice falls on these two processes because at this way, we are able to include a wide range of probability distributions often used to model the time in many theoretical and real situations.   Clearly, every random variable successively used as random clock will be supposed independent from $X_m(t)$ and $N(t)$. Therefore, we analyze the effect due to the composition of these random times with the random motion $X_m(t)$ on the related density laws. 

\subsection{Random times involving Brownian motions}
 Let us consider a Bessel process starting from zero
$$R^d(t)=\sqrt{\sum_{i=1}^d B_i^2(t)},\quad t>0,\, d\geq 1,$$
where $B_i(t)$s are independent standard Brownian motions. It is well-known that the probability density law of $R^d(t)$ is equal to 
\begin{equation}\label{densBessst}
f^d(r)=\frac{1}{\Gamma(\frac d2)}\frac{r^{d-1}}{2^{\frac d2-1} t^{\frac d2}}e^{-\frac{r^2}{2t}},\quad r > 0.
\end{equation}
  At time $t$, we deal with a random motion $X_m(t),\,m=0,1$, with a Bessel random time $R^d(t)$. Since $X_m(R^d(t))$ is located inside $(-cR^d(t),cR^d(t))$, its support is the whole real line. Recalling that $v_0=\frac n2$, $v_1=n+1$ and by indicating with $B(a,b)=\frac{\Gamma(a)\Gamma(b)}{\Gamma(a+b)},\,a>0,\, b>0$ a Gamma function and with $B(t)$ a standard Brownian motion at time $t$, we are able to provide the following theorem.
\begin{theorem}\label{besselteo}
Given $N(t)=n,$ with $n\geq 1$, such that $v_m>\frac d2-1$, we have the following conditional distribution
\begin{eqnarray}\label{stopdist}
P\left\{X_m(R^d(t))\in dx|N(t)=n\right\}=\frac{dx}{B\left(\frac d2,v_m-\frac d2+1\right)}\int_0^1w^{\frac d2-1}(1-w)^{v_m-\frac d2}\frac{e^{-\frac{x^2}{2c^2t w}}}{\sqrt{2\pi tw}c}dw
\end{eqnarray}
with $x\in \mathbb{R}$.
\end{theorem}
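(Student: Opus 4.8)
The plan is to route the whole computation through the characteristic function, for which Corollary~\ref{corcf} already supplies a closed form, and then to read off the density by recognising it as a Beta mixture of centred Gaussians. First I would observe that, since the Bessel clock $R^d(t)$ is independent of $X_m$ and of its governing Poisson process, conditioning on the value of $R^d(t)$ and invoking Theorem~\ref{condlaw} gives the conditional density of $X_m(R^d(t))$ as the mixture $\int_0^\infty p_n^m(x,r)\,f^d(r)\,\de r$; equivalently, interchanging expectation and integration and inserting \eqref{chfcondp},
\[
E\left\{e^{i\xi X_m(R^d(t))}\mid N(t)=n\right\}=\frac{\Gamma(v_m+1)2^{v_m}}{(\xi c)^{v_m}}\,\frac{1}{2^{\frac d2-1}\Gamma(\frac d2)t^{\frac d2}}\int_0^\infty r^{d-1-v_m}e^{-\frac{r^2}{2t}}J_{v_m}(\xi cr)\,\de r .
\]

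The central step is to evaluate the Bessel--Gauss integral $\int_0^\infty r^{d-1-v_m}e^{-r^2/(2t)}J_{v_m}(\xi cr)\,\de r$ by a Weber-type formula (Gradshteyn and Ryzhik, 1980, formula 6.631(1)); this returns an explicit constant times the Kummer confluent hypergeometric function ${}_1F_1(\tfrac d2;\,v_m+1;\,-\tfrac{\xi^2c^2t}{2})$. I expect every Gamma factor and every power of $2$ and of $t$ to cancel, so that the conditional characteristic function collapses to just ${}_1F_1(\tfrac d2;\,v_m+1;\,-\tfrac{\xi^2c^2t}{2})$. This is precisely where the hypothesis $v_m>\frac d2-1$ is used: it is exactly the inequality $b-a=v_m-\frac d2+1>0$ (together with $a=\frac d2>0$) that legitimises Euler's integral representation ${}_1F_1(a;b;z)=\tfrac{1}{B(a,b-a)}\int_0^1 e^{zw}w^{a-1}(1-w)^{b-a-1}\de w$. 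Applying it with $a=\frac d2$, $b=v_m+1$ rewrites the characteristic function as $\frac{1}{B(\frac d2,v_m-\frac d2+1)}\int_0^1 w^{\frac d2-1}(1-w)^{v_m-\frac d2}e^{-\xi^2c^2tw/2}\,\de w$.

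To finish, I would recognise $e^{-\xi^2c^2tw/2}$ as the characteristic function of the $N(0,c^2tw)$ law, so that by Fubini and uniqueness of characteristic functions the last expression is the Fourier transform of exactly the density on the right-hand side of \eqref{stopdist}; this is the assertion of the theorem. Equivalently, one is showing $X_m(R^d(t))\mid\{N(t)=n\}\stackrel{d}{=}c\sqrt{tW}\,Z$ with $W\sim\mathrm{Beta}(\tfrac d2,v_m-\tfrac d2+1)$ and $Z$ standard normal, independent.

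The hard part is purely the bookkeeping in the Weber integral: obtaining the correct constant in front of the ${}_1F_1$ and verifying that it cancels cleanly against $\Gamma(v_m+1)2^{v_m}(\xi c)^{-v_m}$ and the Bessel-density normaliser; the interchange of integrals, the convergence of the Weber integral, and the Fubini step are routine. Two shortcuts are worth keeping in mind. A purely computational one: substituting $w=x^2/(c^2r^2)$ directly in $\int_0^\infty p_n^m(x,r)f^d(r)\,\de r$ yields a constant times $|x|^{d-1}\int_0^1 w^{-(d+1)/2}(1-w)^{v_m-1/2}e^{-x^2/(2c^2tw)}\,\de w$, which coincides with \eqref{stopdist} after a Kummer transformation of the Tricomi function $U$ — the same constant-reshuffling as above. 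A probabilistic one: since by Theorem~\ref{condlaw} the law $p_n^m(\cdot,r)$ is that of $cr\,\varepsilon\sqrt{B}$ with $B\sim\mathrm{Beta}(\tfrac12,v_m+\tfrac12)$ and $\varepsilon$ an independent symmetric sign, while $R^d(t)^2\stackrel{d}{=}t\,\chi^2_d$, the statement reduces to the beta--gamma identity $\Gamma_{d/2}\cdot\mathrm{Beta}(\tfrac12,v_m+\tfrac12)\stackrel{d}{=}\Gamma_{1/2}\cdot\mathrm{Beta}(\tfrac d2,v_m-\tfrac d2+1)$, which follows by combining $\mathrm{Beta}(\tfrac12,\tfrac{d-1}{2})\,\mathrm{Beta}(\tfrac d2,v_m-\tfrac d2+1)\stackrel{d}{=}\mathrm{Beta}(\tfrac12,v_m+\tfrac12)$ with $\Gamma_{d/2}\,\mathrm{Beta}(\tfrac12,\tfrac{d-1}{2})\stackrel{d}{=}\Gamma_{1/2}$, and which once more needs exactly $d\ge1$ and $v_m>\tfrac d2-1$.
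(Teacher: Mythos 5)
Your proposal is correct and follows essentially the same route as the paper: condition on the value of the Bessel clock, pass to the characteristic function via Corollary \ref{corcf}, reduce the Bessel--Gauss integral to the series $\frac{\Gamma(v_m+1)}{\Gamma(\frac d2)}\sum_k\frac{(-1)^k\Gamma(k+\frac d2)}{k!\,\Gamma(k+v_m+1)}\bigl(\tfrac{\xi^2c^2t}{2}\bigr)^k$, convert the Gamma ratio into a Beta integral (which is exactly where $v_m>\frac d2-1$ enters), and invert the resulting Gaussian mixture. The only cosmetic difference is that you invoke Weber's formula and Euler's integral representation of ${}_1F_1$ where the paper carries out the identical computation termwise by hand.
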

\begin{proof} By using a similar approach to that adopted by Beghin and Orsingher (2009), we can write that
\begin{eqnarray*}
&&P\left\{X_m(R^d(t))\in dx|N(t)=n\right\}\\
&&=\int_0^\infty P\left\{X_m(R^d(t))\in dx|N(t)=n,R^d(t)=s\right\}P\{R^d(t)\in  ds\}\\
&&=\frac{dx}{2\pi}\frac{\Gamma(v_m+1)\Gamma(v_m)}{\Gamma(2v_m)}\frac{1}{\Gamma(\frac d2)2^{\frac d2-1}t^{\frac d2}}\int_0^\infty\left(\frac{2}{c s}\right)^{v_m}(c^2s^2-x^2)^{v_m-\frac12}{\bf 1}_{\{|x|<cs\}}s^{d-1}e^{-\frac{s^2}{2t}}ds
\end{eqnarray*}
Therefore, by means of Corollary \ref{corcf} and for any $n\geq 1$ such that $v_m>\frac d2-1$, we are able to explicit the Fourier transform of $X_m(R^d(t))$ as follows 
\begin{eqnarray*}
E\left\{e^{i\alpha X_m(R^d(t))}|N(t)=n\right\}&=&\int_{-\infty}^{+\infty}e^{i\alpha x}P\left\{X_m(R(t))\in dx|N(t)=n\right\}\\
&=&\Gamma(v_m+1)\left(\frac{2}{\alpha c}\right)^{v_m}\frac{1}{\Gamma(\frac d2)2^{\frac d2-1}t^{\frac d2}}\int_0^\infty J_{v_m}(\alpha cs)s^{d-v_m-1}e^{-\frac{s^2}{2t}}ds\\
&=&\frac{\Gamma(v_m+1)}{\Gamma(\frac d2)2^{\frac d2-1}t^{\frac d2}}\sum_{k=0}^\infty\frac{(-1)^k}{k!\Gamma(k+v_m+1)}\left(\frac{\alpha c}{2}\right)^{2k}\int_0^\infty s^{2k+d-1} e^{-\frac{s^2}{2t}}ds\\
&=&\left(y=\frac{s^2}{2t}\right)\\
&=&\frac{\Gamma(v_m+1)}{\Gamma(\frac d2)}\sum_{k=0}^\infty\frac{(-1)^k}{k!\Gamma(k+v_m+1)}\left(\frac{\alpha^2 c^2 t}{2}\right)^{k}\int_0^\infty y^{k+\frac d2-1} e^{- y}dy\\
&=&\frac{\Gamma(v_m+1)}{\Gamma(\frac d2)}\sum_{k=0}^\infty\frac{(-1)^k\Gamma(k+\frac d2)}{k!\Gamma(k+v_m+1)}\left(\frac{\alpha^2 c^2 t}{2}\right)^{k}\\
&=&\frac{\Gamma(v_m+1)}{\Gamma(\frac d2)\Gamma\left(v_m-\frac d2+1\right)}\sum_{k=0}^\infty\frac{(-1)^k}{k!}B\left(k+\frac d2,v_m-\frac d2+1\right)\left(\frac{\alpha^2 c^2 t}{2}\right)^{k}\\
&=&\frac{1}{B\left(\frac d2,,v_m-\frac d2+1\right)}\sum_{k=0}^\infty\frac{(-1)^k}{k!}\left(\frac{\alpha^2 c^2 t}{2}\right)^{k}\int_0^1w^{k+\frac d2-1}(1-w)^{v_m-\frac d2}dw\\
&=&\frac{1}{B\left(\frac d2,v_m-\frac d2+1\right)}\int_0^1w^{\frac d2-1}(1-w)^{v_m-\frac d2}e^{-\frac{\alpha^2 c^2tw}{2}}dw
\end{eqnarray*}
Finally, by inverting $E\left\{e^{i\alpha X_m(R^d(t))}|N(t)=n\right\}$ the result \eqref{stopdist} emerges.  \end{proof}

The probability \eqref{stopdist} claims that, conditionally on the number of Poisson events such that $v_m>\frac d2-1$, the random process $X_m(R^d(t))$, is distributed as a centered Gaussian with variance $c^2tW$, where $W\sim B(\frac d2,v_m-\frac d2+1)$.
\begin{remark}\label{rem}
By using the same approach of the previous proof and bearing in mind the Theorem \ref{chf}, under the condition $\nu>\frac{d}{2}-1$, we get that 
\begin{align}\label{stopdist0}
&P\left\{X_\nu(R^d(t))\in dx|N(t)=0\right\}=\frac{dx}{B\left(\frac d2,\nu-\frac d2+1\right)}\int_0^1w^{\frac d2-1}(1-w)^{\nu-\frac d2}\frac{e^{-\frac{x^2}{2c^2t w}}}{\sqrt{2\pi tw}c}dw
\end{align} 
In particular, we are interested to the random motions obtained by setting $\nu=0$ and $\nu=1$.
It is clear that for $\nu=0$, the condition $\nu>\frac d2-1$ is satisfied only for $d=1$, whilst for $d=2$ one has
\begin{align}
&P\left\{X_0(R^2(t))\in dx|N(t)=0\right\}=dx\frac{e^{-\frac{x^2}{2c^2t }}}{\sqrt{2\pi t}c}.
\end{align} 
If $\nu=1$, the above condition and the representation \eqref{stopdist0} hold for both $d=1$ and $d=2$.
\end{remark}
From \eqref{densBessst}, we can derive some well-known probability distributions. Indeed, for $d=1$, we get $R^1(t)=|B(t)|$, that is a reflected Brownian motion around the $x$-axis and the its density law becomes $f^1(r)=\frac{\sqrt{2}}{\sqrt{\pi t}}e^{-\frac{r^2}{2t}}$. Furthermore, $|B(t)|$ represents the Brownian time used in the definition of the iterated Brownian motion (see, Allouba, 2002). For $d=2$, we obtain a Rayleigh random variable with $f^2(r)=\frac{r}{t}e^{-\frac{r^2}{2t}}$, which also emerges analyzing the distribution of the maximum of a Brownian bridge. Moreover, the probability \eqref{stopdist} holds for each $n\geq 1$, being the conditions $v_m>-\frac12\, (d=1)$ and $v_m>0\, (d=2)$ always satisfied. Actually, for the process $X_1$, the representation \eqref{stopdist} also yields when $d=3,4,5$. Nevertheless, we restrict us to the cases $d=1,2$.  Therefore, we are able to explicit the unconditional probability distributions for $X_m(|B(t)|)$ and $X_m(R^2(t))$.

\begin{theorem}\label{cor} 
For $d=1$, the following probability yields
\begin{align}\label{uncstopdist0}
P\left\{X_m(|B(t)|)\in dx\right\}=dx\int_{-ct}^{ct}\frac{\sqrt{t}e^{-\frac{x^2 t}{2y^2}}}{\sqrt{2\pi y^2}}p^m(y,t)dy,\quad x\in \mathbb{R},
\end{align}
where $p^0(y,t)$  and $p^1(y,t)$ are defined respectively by \eqref{law} and \eqref{law2}. Furthermore, for $d=2$, one has
\begin{align}\label{uncstopdist}
&P\left\{X_0(R^2(t))\in dx\right\}=dx e^{-\lambda t}\left\{\frac{\lambda t}{2}\int_0^1\frac{e^{\lambda t\sqrt{1-w}}}{\sqrt{1-w}}\frac{e^{-\frac{x^2}{2c^2t w}}}{\sqrt{2\pi tw}c}dw+\frac{e^{-\frac{x^2}{2c^2t }}}{\sqrt{2\pi t}c}\right\}\\
&P\left\{X_1(R^2(t))\in dx\right\}=dx\int_0^1e^{-\lambda tw}[1+\lambda t(1-w)]\frac{e^{-\frac{x^2}{2c^2t w}}}{\sqrt{2\pi tw}c}dw\label{uncstopdist2}
\end{align}
\end{theorem}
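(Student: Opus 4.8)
The plan is to obtain both assertions by averaging the conditional laws of Theorem~\ref{besselteo} (supplemented, for $n=0$, by formula \eqref{stopdist0}) against the Poisson weights $e^{-\lambda t}(\lambda t)^n/n!$, and, in the case $d=1$, to recognise the resulting scale mixture of Gaussians as an integral against the densities $p_n^m(\cdot,t)$ of \eqref{condp}.

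For $d=1$ I would first observe that the hypothesis $v_m>\tfrac d2-1=-\tfrac12$ of Theorem~\ref{besselteo} holds for all $n\ge1$, and that $\nu>-\tfrac12$ in \eqref{stopdist0} holds for $\nu=0,1$; hence for every $n\ge0$ the conditional law $P\{X_m(|B(t)|)\in dx\mid N(t)=n\}$ equals $\frac{dx}{B(\frac12,v_m+\frac12)}\int_0^1 w^{-1/2}(1-w)^{v_m-1/2}\frac{e^{-x^2/(2c^2tw)}}{\sqrt{2\pi tw}\,c}\,dw$. Then I would carry out the change of variable $w=(y/ct)^2$, $y\in(0,ct)$, symmetrising the integrand over $(-ct,ct)$: the Gaussian factor becomes $\frac{\sqrt t\,e^{-x^2t/(2y^2)}}{\sqrt{2\pi y^2}}$, while $w^{-1/2}(1-w)^{v_m-1/2}$ times the Jacobian, after using the Legendre duplication formula $\Gamma(v_m)\Gamma(v_m+\tfrac12)=2^{1-2v_m}\sqrt\pi\,\Gamma(2v_m)$ to rewrite $B(\tfrac12,v_m+\tfrac12)^{-1}$, reproduces exactly the constant and the power $(c^2t^2-y^2)^{v_m-1/2}$ of $p_n^m(y,t)$. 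This gives $P\{X_m(|B(t)|)\in dx\mid N(t)=n\}=dx\int_{-ct}^{ct}\frac{\sqrt t\,e^{-x^2t/(2y^2)}}{\sqrt{2\pi y^2}}\,p_n^m(y,t)\,dy$, and summing against $e^{-\lambda t}(\lambda t)^n/n!$, using $p^m(y,t)=\sum_{n\ge0}e^{-\lambda t}(\lambda t)^n p_n^m(y,t)/n!$ (the series \eqref{law}--\eqref{law2}), yields \eqref{uncstopdist0}. Equivalently this is the distributional identity $X_m(|B(t)|)\stackrel{d}{=}B(\tfrac1tX_m^2(t))$: the duplication-formula computation is precisely the statement that the image of $p_n^m(\cdot,t)$ under $y\mapsto y^2/(c^2t^2)$ is the Beta$(\tfrac12,v_m+\tfrac12)$ law appearing in \eqref{stopdist}, so that conditioning on $X_m(t)$ turns $B(\tfrac1tX_m^2(t))$ into a centred Gaussian of variance $X_m^2(t)/t$.

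For $d=2$ I would specialise Theorem~\ref{besselteo}, noting $B(1,v_m)=v_m^{-1}$, so that $P\{X_m(R^2(t))\in dx\mid N(t)=n\}=dx\,v_m\int_0^1(1-w)^{v_m-1}\frac{e^{-x^2/(2c^2tw)}}{\sqrt{2\pi tw}\,c}\,dw$. For $X_1$ one has $v_1=n+1>0$ for all $n\ge0$ (the case $n=0$ being covered by \eqref{stopdist0} with $\nu=1$), and summing against the Poisson weights turns the $w$-integrand into $\sum_{n\ge0}\frac{(\lambda t)^n}{n!}(n+1)(1-w)^n=[1+\lambda t(1-w)]e^{\lambda t(1-w)}$; combined with the factor $e^{-\lambda t}$ this is \eqref{uncstopdist2}. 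For $X_0$ the hypothesis $v_0=\tfrac n2>0$ fails exactly at $n=0$, so the $n=0$ term must be taken from the separate Gaussian formula in \eqref{stopdist0} (with $\nu=0$, $d=2$), which after multiplication by $e^{-\lambda t}$ gives the first summand of \eqref{uncstopdist}; for $n\ge1$ Theorem~\ref{besselteo} gives the integrand $\tfrac n2(1-w)^{n/2-1}$, and $\sum_{n\ge1}\frac{(\lambda t)^n}{n!}\tfrac n2(1-w)^{n/2-1}=\tfrac{\lambda t}{2}\,e^{\lambda t\sqrt{1-w}}/\sqrt{1-w}$ (from $\sum_{n\ge1}x^n/(n-1)!=xe^x$ with $x=\lambda t\sqrt{1-w}$) produces the second summand.

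The main obstacle is the constant-bookkeeping in the $d=1$ step: one must carry the substitution $w=(y/ct)^2$ through both the Gaussian kernel and the Beta density and invoke the duplication formula to see that the mixture collapses to $\int_{-ct}^{ct}\frac{\sqrt t\,e^{-x^2t/(2y^2)}}{\sqrt{2\pi y^2}}p_n^m(y,t)\,dy$ with the correct normalisation. A secondary point, easy to overlook, is that for $d=2$ and $m=0$ the hypothesis $v_m>\tfrac d2-1$ of Theorem~\ref{besselteo} degenerates at $n=0$, so that term must be supplied by \eqref{stopdist0} rather than by the general formula; everything else is rearrangement of absolutely convergent series and hence routine.
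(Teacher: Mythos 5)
Your argument is correct and matches the paper's proof in substance: both rest on the substitution $w=(y/ct)^2$ together with the Legendre duplication formula to identify the Beta mixture of Gaussians with the densities $p_n^m(y,t)$, and on summing the conditional laws against the Poisson weights (with the degenerate $n=0$, $d=2$, $m=0$ term supplied separately as a plain Gaussian, exactly as in the paper's Remark). The only difference is the order of operations --- you change variables in each conditional law and then sum over $n$, whereas the paper sums the series inside the $w$-integral first and substitutes afterwards --- so this is essentially the paper's proof.
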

\begin{proof} By taking into account \eqref{stopdist} and \eqref{stopdist0},
if $m=0\, (v_0=\frac n2)$ and $d=1$, we get that
\begin{eqnarray*}
\frac{1}{dx}P\left\{X_0(|B(t)|)\in dx\right\}&=&e^{-\lambda t}\sum_{n=0}^\infty\frac{(\lambda t)^n}{n!}\frac{1}{B(\frac12,\frac{n+1}{2})}\int_0^1w^{-\frac12}(1-w)^{\frac{n-1}{2}}\frac{e^{-\frac{x^2}{2c^2t w}}}{\sqrt{2\pi t w}c}dw\\
&=&\frac{e^{-\lambda t}}{\sqrt{\pi}}\int_0^1w^{-\frac12}(1-w)^{-\frac{1}{2}}\frac{e^{-\frac{x^2}{2c^2t w}}}{\sqrt{2\pi tw}c}\sum_{n=0}^\infty\frac{(\lambda t)^n}{n!}(1-w)^{\frac n2}\frac{\Gamma(\frac n2+1)}{\Gamma(\frac{n+1}{2})}dw\\
&=&e^{-\lambda t}\int_0^1w^{-\frac12}(1-w)^{-\frac{1}{2}}\frac{e^{-\frac{x^2}{2c^2t w}}}{\sqrt{2\pi tw}c}\sum_{n=0}^\infty\frac{1}{\Gamma^2(\frac{n+1}{2})}\left(\frac{\lambda t}{2}\sqrt{1-w}\right)^ndw\\
&=&\frac{\lambda te^{-\lambda t}}{2}\int_0^1w^{-\frac12}\frac{e^{-\frac{x^2}{2c^2t w}}}{\sqrt{2\pi tw}c}\sum_{n=0}^\infty\frac{1}{\Gamma^2(\frac{n+1}{2})}\left(\frac{\lambda t}{2} \sqrt{1-w}\right)^{n-1}dw\\
&=&(y=ct\sqrt{w})\\
&=&\frac{\lambda e^{-\lambda t}}{c}\int_0^{ct}\frac{\sqrt{t}e^{-\frac{tx^2}{2y^2 }}}{\sqrt{2\pi y^2}}\sum_{n=0}^\infty\frac{1}{\Gamma^2(\frac{n+1}{2})}\left(\frac{\lambda}{2c}  \sqrt{c^2t^2-y^2}\right)^{n-1}dy\\
&=&\int_{-ct}^{ct}\frac{\sqrt{t}e^{-\frac{tx^2}{2y^2 }}}{\sqrt{2\pi y^2}}\frac{\lambda e^{-\lambda t}}{2c}\sum_{n=0}^\infty\frac{1}{\Gamma^2(\frac{n+1}{2})}\left(\frac{\lambda}{2c}  \sqrt{c^2t^2-y^2}\right)^{n-1}dy
\end{eqnarray*}
For $m=1\, (v_1=n+1)$ and $d=1$, we have that
\begin{eqnarray*}
\frac{1}{dx}P\left\{X_1(|B(t)|)\in dx\right\}&=&e^{-\lambda t}\sum_{n=0}^\infty\frac{(\lambda t)^n}{n!}\frac{1}{B(\frac12,n+\frac{3}{2})}\int_0^1w^{-\frac12}(1-w)^{n+\frac{1}{2}}\frac{e^{-\frac{x^2}{2c^2t w}}}{\sqrt{2\pi tw}c}dw\\
&=&\frac{e^{-\lambda t}}{\sqrt{\pi}}\int_0^1w^{-\frac12}\frac{e^{-\frac{x^2}{2c^2t w}}}{\sqrt{2\pi tw}c}\sum_{n=0}^\infty\frac{(\lambda t)^n}{n!}(1-w)^{n+\frac 12}\frac{\Gamma(n+2)}{\Gamma(n+\frac{3}{2})}dw\\
&=&\frac{ e^{-\lambda t}}{\sqrt{\pi\lambda t}}\int_0^1w^{-\frac12}\frac{e^{-\frac{x^2}{2c^2t w}}}{\sqrt{2\pi tw}c}\sum_{n=0}^\infty\frac{n+1}{\Gamma(n+\frac{3}{2})}\left(\lambda t(1-w)\right)^{n+\frac12}dw\\
&=&(y=ct\sqrt{w})\\
&=&\frac{2}{c}\frac{e^{-\lambda t}}{\sqrt{\lambda \pi t^3}}\int_0^{ct}\frac{\sqrt{t}e^{-\frac{x^2 t}{2y^2}}}{\sqrt{2\pi y^2}}\sum_{n=0}^\infty\frac{n+1}{\Gamma(n+\frac{3}{2})}\left(\frac{\lambda}{c^2t}(c^2t^2-y^2)\right)^{n+\frac12}dy\\
&=&\int_{-ct}^{ct}\frac{\sqrt{t}e^{-\frac{x^2 t}{2y^2}}}{\sqrt{2\pi y^2}}\frac{e^{-\lambda t}}{c\sqrt{\lambda \pi t^3}}\sum_{n=0}^\infty\frac{n+1}{\Gamma(n+\frac{3}{2})}\left(\frac{\lambda}{c^2t}(c^2t^2-y^2)\right)^{n+\frac12}dy
\end{eqnarray*}
Therefore, the result \eqref{uncstopdist0} is proved. Developing the quantity
$$e^{-\lambda t}\sum_{n=1}^\infty\frac{(\lambda t)^n}{n!}P\left\{X_m(R^2(t))\in dx|N(t)=n\right\}$$
for $m=0,1$, and by taking into account the Remark \ref{rem}, it is not hard to prove the results \eqref{uncstopdist}, \eqref{uncstopdist2}. 
\end{proof}
We point out that the probability distribution \eqref{uncstopdist0} says us that the process $X_m(t)$ stopped at reflected Brownian time is equivalent in distribution to a Brownian motion with variance $\frac1tX_m^2(t)$, i.e.
\begin{equation}\label{equal}
X_m(|B(t)|)\stackrel{d}{=}B\left(\frac1tX_m^2(t)\right)
\end{equation} 
\begin{remark}
Recalling that for a centered Gaussian with variance $\sigma^2$ the moments are given by
$$\sigma^p\frac{\Gamma(p+1)}{2^{\frac p2}\Gamma(\frac p2+1)}$$
if $p$ is even, while are 0 if $p$ is odd, we obtain that
$$E\{X_m^p(|B(t)|)\}=\frac{\Gamma(p+1)}{(2t)^{\frac p2}\Gamma(\frac p2+1)}E\{X_m^p(t)\}=E\left\{B^p\left(\frac{1}{t}\right)\right\}E\{X_m^p(t)\}$$
where $E\{X_m^p(t)\}$ is defined as in Theorem \ref{momp}. 
\end{remark}

The result \eqref{equal} is more general. Indeed,
let $\mathcal{N}(t)$ be a Gaussian process with mean 0 and variance $\sigma^2(t)$, it is not hard to prove by using the same argument adopted in the proof of Theorem \ref{besselteo} and Corollary \ref{cor}, that $X_m(|\mathcal{N}(t)|)$ is distributed as a Gaussian random variable with variance $\frac{\sigma^2(t)}{t^2}X_m^2(t)$. In other words, the following distributional relationship holds 
\begin{equation*}
X_m(|\mathcal{N}(t)|)\stackrel{d}{=}B\left(\frac{\sigma^2(t)}{t^2}X_m^2(t)\right).
\end{equation*} 
For example, if:
\begin{itemize}
\item $\mathcal{N}(t)=B^H(t)$, that is a fractional Brownian motion with Hurst index $H\in(0,1)$, we have that $\sigma^2(t)=t^{2H}$ and then
$X_m(|B^H(t)|)\stackrel{d}{=}B\left(t^{2H-2}X_m^2(t)\right)
$, which contains as particular case the result \eqref{equal} for $H=\frac 12$;
\item $\mathcal{N}(t)=\int_0^th(s)dB(s)$, where $h(s)$ is a well-defined deterministic function,  the variance is $\sigma^2(t)=\int_0^th^2(s)ds$ and then $X_m(|\int_0^th(s)dB(s)|)\stackrel{d}{=}B\left(\frac{\int_0^th^2(s)ds}{t^2}X_m^2(t)\right)
$. Clearly, for $h(s)=1$, we reobtain the equality \eqref{equal};
\item $\mathcal{N}(t)=\int_0^tB(s)ds$, the variance is given by $\sigma^2(t)=\frac{t^3}{3}$, therefore we obtain that $X_m(\int_0^tB(s)ds)\stackrel{d}{=}B(\frac t3X_m^2(t))$.
\end{itemize}

 Theorem \ref{besselteo} can also be generalized by dealing with an $l$-times interated Bessel process, namely
  $$\mathcal{R}_l^d(t)=R_1^d(R_2^d(...(R_{l+1}^d(t)...))),\quad t>0,$$
 where $R_j^d$s, $j=1,2,...,l$, are independent Bessel processes. The random process $\mathcal{R}_l^d(t)$ has density law given by
 $$f_l^d(r)=\frac{1}{(\Gamma(\frac d2)2^{\frac d2-1})^{l+1}}\int_0^\infty\cdot\cdot\cdot\int_0^\infty\frac{r^{d-1}e^{-\frac{r^2}{2t_1}}}{ t_1^{\frac d2}}\frac{t_1^{d-1}e^{-\frac{t_1^2}{2t_2}}}{t_2^{\frac d2}}\cdot\cdot\cdot \frac{t_l^{d-1}e^{-\frac{t_l^2}{2t}}}{t^{\frac d2}}dt_1dt_2\cdots dt_l$$ 
and leads to the next result.
 
 \begin{theorem}\label{genstopdist}
 Given $N(t)=n$, with $n\geq 1$, such that $v_m>\frac d2-1$, we have that
 \begin{eqnarray}
&& P\left\{X_m(\mathcal{R}_l^d(t))\in dx|N(t)=n\right\}\\
 &&=\frac{dx}{B(\frac d2,v_m-\frac d2+1)}\frac{1}{(\Gamma(\frac d2)2^{(\frac d2-1)})^l}
 \int_0^\infty\frac{t_1^{d-1}e^{-\frac{t_1^2}{2t_2}}}{t_2^{\frac d2}}dt_1\int_0^\infty\frac{t_2^{d-1}e^{-\frac{t_2^2}{2t_3}}}{t_3^{\frac d2}}dt_2\cdot\cdot\cdot\int_0^\infty\frac{t_l^{d-1}e^{-\frac{t_l^2}{2t}}}{t^{\frac d2}}dt_l\notag\\
 &&\quad \times\int_0^1w^{\frac d2-1}(1-w)^{v_m-\frac d2}\frac{e^{-\frac{x^2}{2c^2t_1 w}}}{\sqrt{2\pi t_1w}c}dw\notag
 \end{eqnarray}
 with $x\in \mathbb{R}$.
 \end{theorem}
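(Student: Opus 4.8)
The plan is to peel off the outermost Bessel layer, recognise the remaining composition as a lower-order iterated Bessel process whose density is already recorded above, and then invoke Theorem~\ref{besselteo} once, conditionally on that inner clock. Concretely, write $\mathcal{R}_l^d(t)=R_1^d(\tau)$ with $\tau:=R_2^d(R_3^d(\ldots R_{l+1}^d(t)\ldots))$. Since $R_2^d,\ldots,R_{l+1}^d$ are independent Bessel processes, $\tau$ is, in distribution, an $(l-1)$-times iterated Bessel process started at $t$; hence, using the displayed formula for $f_l^d$ with $l$ replaced by $l-1$ and the integration variables relabelled, its density is
$$g(t_1)=\frac{1}{(\Gamma(\frac d2)2^{\frac d2-1})^{l}}\int_0^\infty\cdots\int_0^\infty\frac{t_1^{d-1}e^{-\frac{t_1^2}{2t_2}}}{t_2^{\frac d2}}\frac{t_2^{d-1}e^{-\frac{t_2^2}{2t_3}}}{t_3^{\frac d2}}\cdots\frac{t_l^{d-1}e^{-\frac{t_l^2}{2t}}}{t^{\frac d2}}\,dt_2\cdots dt_l .$$
Moreover $R_1^d$ is independent of $\tau$, and both are independent of $X_m$ and of the Poisson process $N(t)$.

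Next I would condition on $\{N(t)=n\}$ and on $\{\tau=t_1\}$. Given these, $X_m(R_1^d(t_1))$ is precisely the motion $X_m$ time-changed by a single Bessel clock $R_1^d$ run for the deterministic span $t_1$. Because the hypothesis $v_m>\frac d2-1$ does not involve the length of that span, Theorem~\ref{besselteo} applies with $t$ replaced by $t_1$ and gives
$$P\{X_m(R_1^d(t_1))\in dx\mid N(t)=n,\ \tau=t_1\}=\frac{dx}{B(\frac d2,v_m-\frac d2+1)}\int_0^1 w^{\frac d2-1}(1-w)^{v_m-\frac d2}\frac{e^{-\frac{x^2}{2c^2t_1w}}}{\sqrt{2\pi t_1w}\,c}\,dw .$$
Integrating against the law of $\tau$, i.e. writing $P\{X_m(\mathcal{R}_l^d(t))\in dx\mid N(t)=n\}=\int_0^\infty P\{X_m(R_1^d(t_1))\in dx\mid N(t)=n,\tau=t_1\}\,g(t_1)\,dt_1$ and inserting the two displays above, produces exactly the asserted expression.

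The only genuinely delicate points are bookkeeping ones: one must check that relabelling the $(l-1)$-fold integral inside $g$ reproduces precisely the factors $t_j^{d-1}e^{-t_j^2/(2t_{j+1})}/t_{j+1}^{d/2}$ and the constant $(\Gamma(\frac d2)2^{\frac d2-1})^{-l}$ appearing in the statement, and one must justify moving the $dw$-integration past the $dt_1,\ldots,dt_l$-integrations, which is immediate by Tonelli's theorem since every integrand is nonnegative. An equivalent and perhaps cleaner route is induction on $l$: the base case is Theorem~\ref{besselteo} itself (formally the $l=0$ instance, $\mathcal{R}_0^d(t)=R_1^d(t)$), and the inductive step is the same peeling-off computation, invoking the induction hypothesis in place of the explicit density $g$.
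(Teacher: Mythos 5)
Your proposal is correct, but it takes a genuinely different route from the paper. The paper proves Theorem \ref{genstopdist} from scratch: it conditions on the value of the full iterated clock $\mathcal{R}_l^d(t)$, writes the conditional characteristic function, expands $J_{v_m}$ as a power series, integrates term by term against the innermost Gaussian weight to produce $\Gamma(k+\frac d2)$, rewrites this as a Beta integral, resums to get $e^{-\alpha^2c^2t_1w/2}$, and finally inverts the Fourier transform --- i.e.\ it repeats the entire computation of Theorem \ref{besselteo} with the extra $t_2,\dots,t_l$ integrations carried along as spectators. You instead peel off the outer layer, write $\mathcal{R}_l^d(t)=R_1^d(\tau)$ with $\tau$ an $(l-1)$-times iterated Bessel clock, invoke Theorem \ref{besselteo} conditionally on $\tau=t_1$ (the hypothesis $v_m>\frac d2-1$ being independent of the time span), and integrate against the density of $\tau$; your bookkeeping of the prefactor $(\Gamma(\frac d2)2^{\frac d2-1})^{-l}$ and of the $l-1$ remaining integration variables is right. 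Your argument is shorter and more structural, and the inductive variant you sketch makes the recursion transparent; what the paper's longer computation buys is an explicit, self-contained verification of the characteristic function at every stage. One small caveat: your key conditioning step tacitly uses the same convention the paper itself adopts in proving Theorem \ref{besselteo}, namely that ``given $N(t)=n$'' the law of $X_m(s)$ is $p_n^m(\cdot,s)$ for \emph{any} time argument $s$ (the conditioning fixes the number of velocity changes, not the Poisson count at the random time). Since that is exactly how the paper reads its own notation, this is not a gap, but it is worth stating explicitly if you write the argument up.
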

 \begin{proof} Since $X_m(\mathcal{R}_l^d(t))$ has support on the interval $(-c\mathcal{R}_l^d(t),c\mathcal{R}_l^d(t))$ and then on $\mathbb{R}$, we can write that
\begin{eqnarray*}
&&P\left\{X_m(\mathcal{R}_l^d(t))\in dx|N(t)=n\right\}\\
&&=\int_0^\infty P\left\{X_m(\mathcal{R}_l^d(t))\in dx|N(t)=n,\mathcal{R}_l^d(t)=s\right\}P\{\mathcal{R}_l^d(t)\in  ds\}\\
&&=\frac{dx}{2\pi}\frac{\Gamma(v_m+1)\Gamma(v_m)}{\Gamma(2v_m)}\int_0^\infty\left(\frac{2}{c s}\right)^{v_m}(c^2s^2-x^2)^{v_m-\frac12}{\bf 1}_{\{|x|<cs\}}s^{d-1}f_l^d(s)ds.
\end{eqnarray*}
The conditional characteristic function becomes
\begin{eqnarray*}
&&E\left\{e^{i\alpha X_m(\mathcal{R}_l^d(t))}|N(t)=n\right\}\\
&&=\Gamma(v_m+1)\int_0^\infty\left(\frac{2}{\alpha cs}\right)^{v_m}J_{v_m}(\alpha cs)f_l^d(s)ds\\
&&=\frac{\Gamma(v_m+1)}{(\Gamma(\frac d2)2^{\frac d2-1})^{l+1}}\int_0^\infty dt_1 \cdot\cdot\cdot\int_0^\infty dt_l\left(\prod_{j=2}^{l+1}\frac{t_{j-1}^{d-1}}{t_j^{\frac d2}}e^{-\frac{t_{j-1}^2}{2t_j}} \right)\\
&&\quad \times\frac{1}{t_1^{\frac d2}}\sum_{k=0}^\infty\frac{(-1)^k}{k!\Gamma(k+v_m+1)}\left(\frac{\alpha c}{2}\right)^{2k}\int_0^\infty s^{2k+d-1} e^{-\frac{ s^2}{2t_1}}ds\\
&&=\left(y=\frac{s^2}{2t_1}\right)\\
&&=\frac{\Gamma(v_m+1)}{(\Gamma(\frac d2))^{l+1}2^{(\frac d2-1)l}}\int_0^\infty dt_1 \cdot\cdot\cdot\int_0^\infty dt_l\left(\prod_{j=2}^{l+1}\frac{t_{j-1}^{d-1}}{t_j^{\frac d2}}e^{-\frac{t_{j-1}^2}{2t_j}} \right)\sum_{k=0}^\infty\frac{(-1)^k\Gamma(k+\frac d2)}{k!\Gamma(k+v_m+1)}\left(\frac{\alpha^2 c^2t_1}{2}\right)^{k}\\
&&=\frac{\Gamma(v_m+1)}{(\Gamma(\frac d2))^{l+1}2^{(\frac d2-1)l}}\int_0^\infty dt_1 \cdot\cdot\cdot\int_0^\infty dt_l\left(\prod_{j=2}^{l+1}\frac{t_{j-1}^{d-1}}{t_j^{\frac d2}}e^{-\frac{t_{j-1}^2}{2t_j}} \right)\sum_{k=0}^\infty\frac{(-1)^kB(k+\frac d2,v_m-\frac d2+1)}{k!\Gamma(v_m-\frac d2+1)}\left(\frac{\alpha^2 c^2t_1}{2}\right)^{k}\\
&&=\frac{\Gamma(v_m+1)}{\Gamma(v_m-\frac d2+1)(\Gamma(\frac d2))^{l+1}2^{(\frac d2-1)l}}\int_0^\infty dt_1 \cdot\cdot\cdot\int_0^\infty dt_l\left(\prod_{j=2}^{l+1}\frac{t_{j-1}^{d-1}}{t_j^{\frac d2}}e^{-\frac{t_{j-1}^2}{2t_j}} \right)\\
&&\quad \times\int_0^1w^{\frac d2-1}(1-w)^{v_m-\frac d2}e^{-\frac{\alpha^2 c^2t_1w}{2}}dw
\end{eqnarray*}
with $t_{l+1}=t$. By inverting the so-obtained  characteristic  function the proof is completed.
 \end{proof}
Similarly to the simple Bessel time, conditionally on $N(t)=n$, such that the constraint $v_m>\frac d2-1$ is satisfied, $X_m(\mathcal{R}_l^d(t)$  is distributed as a Gaussian random variable with variance given by $c^2R_1^d(R_2^d(\cdots (R_l^d(t))\cdots))W$, $W\sim B(\frac d2,v_m-\frac d2+1)$.
 
 Obviously, also the result contained in the Remark \ref{rem} can be generalized as well. Indeed, by means of the same approach used in the proof of the Theorem \ref{genstopdist}, if $\nu>\frac d2-1$, we obtain that
 \begin{eqnarray}\label{genstopdist02}
&& P\left\{X_\nu(\mathcal{R}_l^d(t))\in dx|N(t)=0\right\}\\
 &&=\frac{dx}{B(\frac d2,\nu-\frac d2+1)}\frac{1}{(\Gamma(\frac d2)2^{(\frac d2-1)})^l}
 \int_0^\infty\frac{t_1^{d-1}e^{-\frac{t_1^2}{2t_2}}}{t_2^{\frac d2}}dt_1\int_0^\infty\frac{t_2^{d-1}e^{-\frac{t_2^2}{2t_3}}}{t_3^{\frac d2}}dt_2\cdot\cdot\cdot\int_0^\infty\frac{t_l^{d-1}e^{-\frac{t_l^2}{2t}}}{t^{\frac d2}}dt_l\notag\\
 &&\quad \times\int_0^1w^{\frac d2-1}(1-w)^{\nu-\frac d2}\frac{e^{-\frac{x^2}{2c^2t_1 w}}}{\sqrt{2\pi t_1w}c}dw.\notag
 \end{eqnarray}
For $\nu=0$ and $d=2$, it is easy to show that
  \begin{eqnarray}\label{genstopdist0}
&&P\left\{X_0(\mathcal{R}_l^2(t))\in dx|N(t)=0\right\}\\
&&=dx\int_0^\infty\frac{t_1e^{-\frac{t_1^2}{2t_2}}}{t_2}dt_1\int_0^\infty\frac{t_2e^{-\frac{t_2^2}{2t_3}}}{t_3}dt_2\cdot\cdot\cdot\int_0^\infty\frac{t_le^{-\frac{t_l^2}{2t}}}{t}dt_l\frac{e^{-\frac{x^2}{2c^2t_1}}}{\sqrt{2\pi t_1}}.\notag
\notag
\end{eqnarray}
By setting $d=1$, $\mathcal{R}_l^d(t)$  becomes an $l$-iterated Brownian motion , namely $\mathcal{R}_l^1(t)=|B_1(|B_2(|\cdots(|B_{l+1}(t)|)\cdots|)|)|$, where $B_j$s are independent Brownian motions. Then, for each $n\geq 1$, we get that
 \begin{eqnarray}
 &&P\left\{X_m(\mathcal{R}_l^1(t))\in dx|N(t)=n\right\}\\
 &&=\frac{dx2^{\frac l2}}{B(\frac 12,v_m+\frac 12)}
 \int_0^\infty\frac{e^{-\frac{t_1^2}{2t_2}}}{\sqrt{\pi t_2}}dt_1\int_0^\infty\frac{e^{-\frac{t_2^2}{2t_3}}}{\sqrt{\pi t_3}}dt_2\cdot\cdot\cdot\int_0^\infty\frac{e^{-\frac{t_l^2}{2t}}}{\sqrt{\pi t}}dt_l\notag\\
 &&\quad \times\int_0^1w^{-\frac 12}(1-w)^{v_m-\frac 12}\frac{e^{-\frac{x^2}{2c^2t_1 w}}}{\sqrt{2\pi t_1w}c}dw.\notag
 \end{eqnarray}
 Furthermore, after some calculations similar to those of the proof of Theorem \ref{cor}, we are able to explicit the following unconditonal distribution 
   \begin{eqnarray}
 &&P\left\{X_m(|B_1(|B_2(|\cdots(|B_{l+1}(t)|)\cdots)|)|)|)\in dx\right\}\\
 &&=dx
 2^{\frac l2}\int_0^\infty\frac{e^{-\frac{t_1^2}{2t_2}}}{\sqrt{\pi t_2}}dt_1\int_0^\infty\frac{e^{-\frac{t_2^2}{2t_3}}}{\sqrt{\pi t_3}}dt_2\cdot\cdot\cdot\int_0^\infty\frac{e^{-\frac{t_l^2}{2t}}}{\sqrt{\pi t}}dt_l\int_{-ct}^{ct}\frac{te^{-\frac{x^2t^2}{2t_1y^2}}}{\sqrt{2\pi t_1y^2}}p^m(y,t)dy\notag
 \end{eqnarray}
which allows us to claim that 
$$X_m(\mathcal{R}_l^1(t))\stackrel{d}{=}B\left(|B_1(|\cdots(|B_{l}(t)|)\cdots)|)|\frac{X_m^2(t)}{t^2}\right).$$

Now, we analyze the effect due to the random clock defined as the time spent on the positive axis (sojourn time) by a standard Brownian motion $B(t),\,t>0$, namely
$$\Gamma(t)=\int_0^t{\bf 1}_{\{B(s)>0\}}(s)ds.$$ 
The density function of $\Gamma(t)$ is $\gamma(s)=\frac{1}{\pi\sqrt{s(t-s)}},\,0<s<t$, that it is also known as arcsin law. We have the following result concerning $X_m(\Gamma(t))$.
\begin{theorem} For $n\geq 1$, the following probability distribution holds
\begin{eqnarray}\label{teosouj}
P\{X_m(\Gamma(t))\in dx|N(t)=n\}=\frac{dx}{B(1,v_m)\pi} \int_0^t\frac{ds}{\sqrt{s(t-s)}}\int_0^1(1-w)^{v_m-1}dw\frac{1}{2\pi}\frac{1}{\sqrt{c^2s^2w-x^2}}.
\notag
\end{eqnarray}
\end{theorem}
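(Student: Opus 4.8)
The plan is to follow the conditioning scheme used in the proofs of Theorems~\ref{besselteo} and~\ref{genstopdist}. Since the clock $\Gamma(t)$ is independent of $X_m$ and of $N(t)$, conditioning on the value of the sojourn time gives
\begin{align*}
P\{X_m(\Gamma(t))\in dx|N(t)=n\}&=\int_0^t P\{X_m(\Gamma(t))\in dx|N(t)=n,\Gamma(t)=s\}\,\gamma(s)\,ds\\
&=\int_0^t p_n^m(x,s)\,\frac{ds}{\pi\sqrt{s(t-s)}},
\end{align*}
where $p_n^m(x,s)$ is the conditional density \eqref{condp} of Theorem~\ref{condlaw} evaluated at time $s$, namely
\begin{equation*}
p_n^m(x,s)=\frac{\Gamma(v_m)\Gamma(v_m+1)}{2\pi\Gamma(2v_m)}\left(\frac{2}{cs}\right)^{2v_m}(c^2s^2-x^2)^{v_m-\frac12}\,{\bf 1}_{\{|x|<cs\}},
\end{equation*}
and $\gamma(s)=1/(\pi\sqrt{s(t-s)})$ is the arcsine density of $\Gamma(t)$. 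The hypothesis $n\geq 1$ guarantees $v_m>0$ (recall $v_0=n/2$, $v_1=n+1$), which is precisely the condition ensuring convergence of the inner integral appearing below.

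After this, the theorem reduces to the pointwise-in-$s$ identity
\begin{equation*}
p_n^m(x,s)=\frac{1}{B(1,v_m)}\int_0^1 (1-w)^{v_m-1}\,\frac{{\bf 1}_{\{c^2s^2w>x^2\}}}{2\pi\sqrt{c^2s^2w-x^2}}\,dw,
\end{equation*}
since substituting it back and carrying the $ds$-integral through untouched (Tonelli applies, the integrand being nonnegative) reproduces the asserted expression. To prove the identity I would substitute $c^2s^2w-x^2=(c^2s^2-x^2)\tau$, $\tau\in(0,1)$, in the inner integral: it collapses to $\int_0^1(1-\tau)^{v_m-1}\tau^{-1/2}\,d\tau=B(\tfrac12,v_m)$ times suitable powers of $c^2s^2-x^2$ and of $cs$, and it then remains to match the resulting constant with the prefactor of $p_n^m$. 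That last step is a routine Gamma-function manipulation using $B(1,v_m)=1/v_m$, $B(\tfrac12,v_m)=\sqrt{\pi}\,\Gamma(v_m)/\Gamma(v_m+\tfrac12)$ and the Legendre duplication formula $\Gamma(2v_m)=2^{2v_m-1}\pi^{-1/2}\Gamma(v_m)\Gamma(v_m+\tfrac12)$. Alternatively one can mimic the characteristic-function argument of Theorem~\ref{besselteo}: for fixed $s$ the Fourier transform in $x$ of the right-hand side equals $\Gamma(v_m+1)2^{v_m}(\alpha cs)^{-v_m}J_{v_m}(\alpha cs)$, which by Corollary~\ref{corcf} is exactly the characteristic function of $p_n^m(\cdot,s)$; here one uses $\int_{-a}^{a}e^{i\alpha x}(a^2-x^2)^{-1/2}\,dx=\pi J_0(\alpha a)$ with $a=cs\sqrt{w}$ and then a Sonine-type Bessel integral (after the substitution $w=u^2$) for the $w$-integration.

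The only genuine obstacle is the bookkeeping. One must keep the support indicator ${\bf 1}_{\{c^2s^2w>x^2\}}$ in force so the inner square root remains real; equivalently, the effective lower limit in $w$ is $x^2/(c^2s^2)$, which is exactly what the substitution $c^2s^2w-x^2=(c^2s^2-x^2)\tau$ linearises, and this matches the vanishing of $p_n^m(x,s)$ when $|x|\geq cs$. The remaining care lies in threading the normalising constants through the duplication formula. No idea beyond those already used in Theorems~\ref{besselteo}--\ref{genstopdist} is required.
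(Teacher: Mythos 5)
Your decomposition is genuinely different from the paper's and, in structure, cleaner. The paper computes the characteristic function of the full mixture (including the $ds$-integral over the arcsine law), expands it as a series, reorganizes the coefficients into two Beta integrals via the duplication formula, and only then inverts termwise using $J_0$. You instead leave the $ds$-integral untouched and reduce everything to a pointwise-in-$s$ Beta-mixture identity for $p_n^m(x,s)$, verified by a one-dimensional substitution. That route avoids Fourier inversion altogether and is the more transparent argument; your alternative characteristic-function route (Sonine integral after $w=u^2$) is essentially the paper's computation done for fixed $s$.

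However, the step you defer as ``routine Gamma-function manipulation'' is exactly where the argument fails to close: the constants do not match. Carrying out your substitution $c^2s^2w-x^2=(c^2s^2-x^2)\tau$ gives
\begin{equation*}
\frac{1}{B(1,v_m)}\int_{x^2/(c^2s^2)}^{1}(1-w)^{v_m-1}\frac{dw}{2\pi\sqrt{c^2s^2w-x^2}}
=\frac{v_m}{2\pi}\,\frac{(c^2s^2-x^2)^{v_m-\frac12}}{(cs)^{2v_m}}\,\frac{\Gamma(\tfrac12)\Gamma(v_m)}{\Gamma(v_m+\tfrac12)}
=\frac12\,p_n^m(x,s),
\end{equation*}
by the duplication formula, i.e.\ half of what your identity asserts. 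The quickest sanity check: integrating the right-hand side of the stated theorem over $x\in\mathbb{R}$ gives $\tfrac12\cdot B(1,v_m)\cdot\pi\cdot\frac{1}{B(1,v_m)\pi}=\tfrac12$, whereas the left-hand side is a probability. So the identity you wrote down (reverse-engineered from the statement) is false as written, and your proof, done honestly, establishes the theorem with $\frac{1}{\pi\sqrt{c^2s^2w-x^2}}$ in place of $\frac{1}{2\pi\sqrt{c^2s^2w-x^2}}$. The discrepancy originates in the paper itself: its characteristic function $\frac{v_m}{\pi}\int_0^1 z^{-1/2}(1-z)^{-1/2}dz\int_0^1(1-w)^{v_m-1}dw\,J_0(\alpha ctz\sqrt{w})$ is correctly normalized (it equals $1$ at $\alpha=0$), but the inversion replaces $J_0(\alpha a)$ by $\frac{1}{2\pi\sqrt{a^2-x^2}}$ instead of $\frac{1}{\pi\sqrt{a^2-x^2}}$. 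Your method is sound and would detect this; you just need to actually perform the constant matching rather than assert it, and then either correct the target statement or flag the mismatch.
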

\begin{proof} In this case
$$P\{X_m(\Gamma(t))\in dx|N(t)=n\}=\frac{dx}{2\pi}\frac{\Gamma(v_m+1)\Gamma(v_m)}{\Gamma(2v_m)}\int_0^t\left(\frac{2}{ cs}\right)^{v_m}(c^2s^2-x^2)^{v_m-\frac12}\gamma(s)ds$$
As done so far, we consider the Fourier transform for $X_m(\Gamma(t))$. Hence, we get
\begin{eqnarray*}
E\{e^{i\alpha X_m(\Gamma(t))}|N(t)=n\}&=&\frac{\Gamma(v_m+1)}{\pi}\sum_{k=0}^\infty\frac{(-1)^k}{k!\Gamma(k+v_m+1)}\left(\frac{\alpha c}{2}\right)^{2k}\int_0^ts^{2k-\frac12}(t-s)^{-\frac12}ds\\
&=&(s=ty)\\
&=&\frac{\Gamma(v_m+1)}{\pi}\sum_{k=0}^\infty\frac{(-1)^k}{k!\Gamma(k+v_m+1)}\left(\frac{\alpha ct}{2}\right)^{2k}\int_0^1y^{2k-\frac12}(1-y)^{-\frac12}dy\\
&=&\frac{\Gamma(v_m+1)}{\sqrt{\pi}}\sum_{k=0}^\infty\frac{(-1)^k\Gamma(2k+\frac12)}{k!\Gamma(k+v_m+1)\Gamma(2k+1)}\left(\frac{\alpha ct}{2}\right)^{2k}\\
&=&\frac{v_m}{\pi}\sum_{k=0}^\infty\frac{(-1)^k\Gamma(2k+\frac12)\Gamma(k+1)\Gamma(v_m)\Gamma(\frac12)}{k!\Gamma(k+1)\Gamma(k+v_m+1)\Gamma(2k+1)}\left(\frac{\alpha ct}{2}\right)^{2k}\\
&=&\frac{v_m}{\pi}\sum_{k=0}^\infty\frac{(-1)^k}{(k!)^2}B\left(2k+\frac12,\frac12\right)B\left(k+1,v_m\right)\left(\frac{\alpha ct}{2}\right)^{2k}\\
&=&\frac{v_m}{\pi} \int_0^1z^{-\frac12}(1-z)^{-\frac12}dz\int_0^1(1-w)^{v_m-1}dwJ_0(\alpha ct z\sqrt{w})
\end{eqnarray*}
Therefore, by taking into account the Theorem \ref{chf}-\ref{condlaw}, we obtain that
\begin{eqnarray*}
\frac{1}{dx}P\{X_m(\Gamma(t))\in dx|N(t)=n\}&=&\frac{v_m}{\pi} \int_0^1z^{-\frac12}(1-z)^{-\frac12}dz\int_0^1(1-w)^{v_m-1}dw\frac{1}{2\pi}\frac{1}{\sqrt{c^2t^2z^2w-x^2}}\\
&=&(s=tz)\\
&=&\frac{v_m}{\pi} \int_0^t\frac{ds}{\sqrt{s(t-s)}}\int_0^1(1-w)^{v_m-1}dw\frac{1}{2\pi}\frac{1}{\sqrt{c^2s^2w-x^2}}\end{eqnarray*}
\end{proof}
From Theorem \eqref{teosouj} we conclude that the random motions $X_m(t)$ with random time $\Gamma(t)$ is equivalent in distribution, to the random motion $X_0(t)$ with a random time given by $\Gamma(t)\sqrt{W}$, where $W\sim B(1,v_m)$, which maintains the velocity initially chosen until $t$. Then, one has that

$$P\{X_m(\Gamma(t))\in dx|N(t)=n\}=P\{X_0(\Gamma(t)\sqrt{W})\in dx|N(t)=0\}.$$

\begin{remark}
The random process $\Gamma(t)$ is also connected with $X_m(R^d(t))$. Indeed, recalling that $$P\{\Gamma(t)\in ds|B(t)>0\}=\frac{2}{\pi t}\frac{\sqrt{s}}{\sqrt{(t-s)}}ds, \,P\{\Gamma(t)\in ds|B(t)<0\}=\frac{2}{\pi t}\frac{\sqrt{t-s}}{\sqrt{s}}ds,$$ $0<s<t$, from \eqref{stopdist}  and \eqref{stopdist0} the following equalities hold:
\begin{eqnarray*}
P\{X_0(|B_1(t)|)\in dx|N(t)=0\}&=&P\{B_1(c^2\Gamma(t))\in dx\}\\
&=&\frac{dx}{\pi}\int_0^t\frac{1}{\sqrt{s(t-s)}}\frac{e^{-\frac{x^2}{2c^2 s}}}{\sqrt{2\pi s}c}ds,
\end{eqnarray*}
\begin{eqnarray*}
P\{X_0(R^3(t))\in dx|N(t)=2\}&=&P\{X_1(R^3(t))\in dx|N(t)=0\}\\
&=&P\{B_1(c^2\Gamma(t))\in dx|B(t)>0\}\\
&=&\frac{dx 2}{\pi t}\int_0^t\frac{\sqrt{s}}{\sqrt{(t-s)}}\frac{e^{-\frac{x^2}{2c^2 s}}}{\sqrt{2\pi s}c}ds,
\end{eqnarray*}
\begin{eqnarray*}
P\{X_0(|B_1(t)|)\in dx|N(t)=2\}&=&P\{X_1(|B_1(t)|)\in dx|N(t)=0\}\\
&=&P\{B_1(c^2\Gamma(t))\in dx|B(t)<0\}\\
&=&\frac{dx 2}{\pi t}\int_0^t\frac{\sqrt{t-s}}{\sqrt{s}}\frac{e^{-\frac{x^2}{2c^2 s}}}{\sqrt{2\pi s}c}ds,
\end{eqnarray*}
where $B_1(t)$ is a independent Brownian motion with respect to $B(t)$.
\end{remark}

\subsection{Compositions with Gamma random times}

In this part of the paper, we deal with a second class of random times different with respect to the previous one. 
 We indicate with $G_\alpha(t)$ a Gamma random process, with parameter $\alpha>0$, governed by the density law $g_\alpha(s,t)=\frac{t^\alpha}{\Gamma(\alpha)}s^{\alpha-1}e^{-ts}, s>0$. Analogously to the Bessel case, first of all we study the conditional probability distribution.
 
 \begin{theorem} \label{teogamma}
 Given $N(t)=n\geq 1$, such that $v_m>\frac{\alpha }{2}-1,$ the random process $X_m(G_\alpha(t))$ has the following conditional probabilities
 \begin{eqnarray}\label{distgamma}
 &&P\left\{X_m(G_\alpha(t))\in dx|N(t)=n\right\}\\
 &&=\frac{dx}{\Gamma(\frac{\alpha+1}{2})B(\frac\alpha 2,v_m-\frac\alpha 2+1)} \int_0^1dw w^{\frac{\alpha}{2}-1}(1-w)^{v_m-\frac\alpha 2}\frac{t}{\sqrt{\pi w}c}\left(\frac{t|x|}{2c\sqrt{w}}\right)^{\frac\alpha 2}K_{-\frac\alpha 2}\left(\frac{t|x|}{c\sqrt{w}}\right)\notag \end{eqnarray}
where $K_\mu(x)$ is the second type modified Bessel function.
 \end{theorem}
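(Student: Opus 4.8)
The plan is to imitate the proof of Theorem~\ref{besselteo}: condition on the value $s$ of the Gamma clock, pass to the conditional characteristic function, expand the Bessel function in its power series, integrate out $s$ against $g_\alpha(s,t)$, re-sum, and finally invert.

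First I would write
$$P\{X_m(G_\alpha(t))\in dx\mid N(t)=n\}=\int_0^\infty P\{X_m(s)\in dx\mid N(t)=n\}\,g_\alpha(s,t)\,ds,$$
so that, using the conditional characteristic function of Corollary~\ref{corcf} (with Fourier variable $\xi$, to avoid clash with the Gamma parameter $\alpha$) and the series $J_{v_m}(\xi cs)=\sum_k(-1)^k(\xi cs/2)^{2k+v_m}/(k!\,\Gamma(k+v_m+1))$,
$$E\{e^{i\xi X_m(G_\alpha(t))}\mid N(t)=n\}=\frac{t^\alpha\Gamma(v_m+1)}{\Gamma(\alpha)}\sum_{k\ge0}\frac{(-1)^k}{k!\,\Gamma(k+v_m+1)}\Bigl(\tfrac{\xi c}{2}\Bigr)^{2k}\int_0^\infty s^{2k+\alpha-1}e^{-ts}\,ds,$$
where the interchange of sum and integral is justified by absolute convergence. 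The inner integral equals $\Gamma(2k+\alpha)/t^{2k+\alpha}$.

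Next I would apply the Legendre duplication formula, $\Gamma(2k+\alpha)=2^{2k+\alpha-1}\pi^{-1/2}\Gamma(k+\tfrac\alpha2)\Gamma(k+\tfrac{\alpha+1}2)$, and — this is where the hypothesis $v_m>\tfrac\alpha2-1$ is used — replace $\Gamma(k+\tfrac\alpha2)/\Gamma(k+v_m+1)$ by $B(k+\tfrac\alpha2,v_m-\tfrac\alpha2+1)/\Gamma(v_m-\tfrac\alpha2+1)$ and write the Beta function as $\int_0^1 w^{k+\alpha/2-1}(1-w)^{v_m-\alpha/2}\,dw$. After interchanging once more, the residual $k$-series is a binomial series, $\sum_k\frac{(\tfrac{\alpha+1}2)_k}{k!}(-z)^k=(1+z)^{-(\alpha+1)/2}$ with $z=\xi^2c^2w/t^2$, and one more use of the duplication formula (now $\Gamma(\alpha)=2^{\alpha-1}\pi^{-1/2}\Gamma(\tfrac\alpha2)\Gamma(\tfrac{\alpha+1}2)$) collapses the constants to
$$E\{e^{i\xi X_m(G_\alpha(t))}\mid N(t)=n\}=\frac{1}{B(\tfrac\alpha2,v_m-\tfrac\alpha2+1)}\int_0^1 w^{\frac\alpha2-1}(1-w)^{v_m-\frac\alpha2}\Bigl(1+\tfrac{\xi^2c^2w}{t^2}\Bigr)^{-\frac{\alpha+1}2}dw.$$

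Finally I would invert term by term in $w$: for fixed $w$, $(1+b^2\xi^2)^{-(\alpha+1)/2}$ with $b=c\sqrt w/t$ is the characteristic function of a symmetric variance--Gamma law, which one identifies by writing it as a Gaussian scale mixture over a $\mathrm{Gamma}(\tfrac{\alpha+1}2,\tfrac1{2b^2})$ variance and using $\int_0^\infty\tau^{\nu-1}e^{-a\tau-b'/\tau}\,d\tau=2(b'/a)^{\nu/2}K_\nu(2\sqrt{ab'})$ together with $K_{-\nu}=K_\nu$; this produces exactly the inner density $\tfrac{t}{\Gamma(\frac{\alpha+1}2)\sqrt{\pi w}\,c}\bigl(\tfrac{t|x|}{2c\sqrt w}\bigr)^{\alpha/2}K_{-\alpha/2}\bigl(\tfrac{t|x|}{c\sqrt w}\bigr)$, and Fubini carries the Beta density in $w$ outside, giving \eqref{distgamma}. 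The main obstacle is this last identification of the $K$-Bessel density from its Fourier transform; an alternative that avoids it is to substitute the conditional law of Theorem~\ref{condlaw} directly into the mixing integral over $s$ and recognize a standard integral representation of $K_{\alpha/2}$, but either route rests on the same special-function identity.
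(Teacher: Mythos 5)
Your argument is correct and is essentially the paper's own proof: the paper likewise conditions on the Gamma clock, computes the characteristic function from Corollary \ref{corcf}, integrates out $s$ to get $\Gamma(2k+\alpha)/t^{2k+\alpha}$, applies the duplication formula, and introduces the Beta integral under the hypothesis $v_m>\frac\alpha2-1$. The only (cosmetic) divergence is at the end: where you re-sum to the variance--gamma characteristic function $(1+\xi^2c^2w/t^2)^{-(\alpha+1)/2}$ (valid first for $\xi^2c^2w<t^2$, then by continuation) and unfold it as a Gaussian scale mixture, the paper instead represents $\Gamma(k+\frac{\alpha+1}{2})$ as $\int_0^\infty e^{-z}z^{k+\frac{\alpha+1}{2}-1}dz$ before re-summing and collapses that $z$-integral via $K_\mu(x)=\frac12(\frac x2)^\mu\int_0^\infty e^{-z-\frac{x^2}{4z}}z^{-\mu-1}dz$ --- the same Gaussian--Gamma mixture identity in a different order.
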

 \begin{proof}
 The random process $X_m(G_\alpha(t))$ admits as support the real line, hence we get that
 \begin{eqnarray*} 
&&P\left\{X_m(G_\alpha(t))\in dx|N(t)=n\right\}\\
&&=\int_0^\infty P\left\{X_m(G_\alpha(t))\in dx|N(t)=n, G_\alpha(t)=s\right\}P\{G_\alpha(t)\in ds\}\\
&&=\frac{dx}{2\pi}\frac{\Gamma(v_m+1)\Gamma(v_m)}{\Gamma(2v_m)}\frac{t^{\alpha }}{\Gamma(\alpha)}\int_0^\infty\left(\frac{2}{cs}\right)^{v_m}(c^2s^2-x^2)^{v_m-\frac12}{\bf 1}_{\{|x|<cs\}}s^{\alpha -1}e^{-ts}ds
\end{eqnarray*}
Then
\begin{eqnarray*}
&&E\left\{e^{i\alpha X_m(G_\alpha(t))}|N(t)=n\right\}\\
&&=\Gamma(v_m+1)\left(\frac{2}{\alpha c}\right)^{v_m}\frac{t^{\alpha }}{\Gamma(\alpha)}\int_0^\infty \frac{J_{v_m}(\alpha cs)}{s^{v_m}}s^{\alpha -1}e^{-ts}ds\\
&&=\Gamma(v_m+1)\frac{t^{\alpha }}{\Gamma(\alpha)}\sum_{k=0}^\infty \frac{(-1)^k}{k!\Gamma(k+v_m+1)}\left(\frac{\alpha c}{2}\right)^{2k}\int_0^\infty s^{2k+\alpha -1}e^{-ts}ds\\
&&= \frac{\Gamma(v_m+1)}{\Gamma(\alpha)}\sum_{k=0}^\infty \frac{(-1)^k\Gamma(2k+\alpha)}{k!\Gamma(k+v_m+1)}\left(\frac{\alpha c }{2t}\right)^{2k}\\
&&=(\text{by duplication formula})\\
&&= \frac{\Gamma(v_m+1)}{\sqrt{\pi}\Gamma(\alpha)}\sum_{k=0}^\infty \frac{(-1)^k\Gamma(k+\frac \alpha2)\Gamma(k+\frac{\alpha +1}{2})2^{2k+\alpha-1}}{k!\Gamma(k+v_m+1)}\left(\frac{\alpha c }{2t}\right)^{2k}\\
&&= \frac{\Gamma(v_m+1)2^{\alpha -1}}{\sqrt{\pi}\Gamma(\alpha)\Gamma(v_m-\frac\alpha2+1)}\sum_{k=0}^\infty \frac{(-1)^k}{k!}\left(\frac{\alpha c}{t}\right)^{2k}\Gamma\left(k+\frac{\alpha+1}{2}\right)B\left(k+\frac\alpha2,v_m-\frac\alpha2+1\right)\\
&&= \frac{\Gamma(v_m+1)}{\Gamma(\frac\alpha2)\Gamma(\frac{\alpha +1}{2})\Gamma(v_m-\frac\alpha2+1)}\sum_{k=0}^\infty \frac{(-1)^k}{k!}\left(\frac{\alpha c}{t}\right)^{2k}\Gamma\left(k+\frac{\alpha+1}{2}\right)B\left(k+\frac\alpha2,v_m-\frac\alpha2+1\right)\\
&&=\frac{1}{\Gamma(\frac{\alpha +1}{2}) B(\frac\alpha2,v_m-\frac\alpha2+1)}\sum_{k=0}^\infty  \frac{(-1)^k}{k!}\left(\frac{\alpha c}{t}\right)^{2k}\int_0^\infty e^{-z}z^{k+\frac{\alpha+1}{2}-1}dz\int_0^1w^{k+\frac\alpha2-1}(1-w)^{v_m-\frac\alpha2}dw\\
&&=\frac{1}{\Gamma(\frac{\alpha +1}{2}) B(\frac\alpha2,v_m-\frac\alpha2+1)}\int_0^\infty e^{-z}z^{\frac{\alpha+1}{2}-1}dz \int_0^1w^{\frac\alpha2-1}(1-w)^{v_m-\frac\alpha2}dwe^{-\left(\frac{\alpha c}{t}\right)^2zw}.
\end{eqnarray*}
By inverting the above quantity, we obtain that
 \begin{eqnarray}\label{densproof}
 &&\frac{1}{dx}P\left\{X_m(G_\alpha(t))\in dx|N(t)=n\right\}\\
 &&=\frac{1}{\Gamma(\frac{\alpha+1}{2})}\int_0^\infty dz e^{-z}z^{\frac{\alpha +1}{2}-1}\frac{1}{B(\frac\alpha 2,v_m-\frac\alpha 2+1)} \int_0^1dw w^{\frac{\alpha}{2}-1}(1-w)^{v_m-\frac\alpha 2}\frac{te^{-\frac{t^2x^2}{4c^2zw}}}{ \sqrt{4\pi zw}c}\notag\\
 &&=\frac{1}{\Gamma(\frac{\alpha+1}{2})}\frac{1}{B(\frac\alpha 2,v_m-\frac\alpha 2+1)} \int_0^1dw w^{\frac{\alpha}{2}-1}(1-w)^{v_m-\frac\alpha 2}\frac{t}{\sqrt{\pi w}c}\left(\frac{t|x|}{2c\sqrt{w}}\right)^{\frac\alpha 2}K_{-\frac\alpha 2}\left(\frac{t|x|}{c\sqrt{w}}\right)\notag
 \end{eqnarray}
 where in the last step we have used the following integral representation
 $$K_\mu(x)=\frac12\left(\frac x2\right)^\mu\int_0^\infty e^{-z-\frac{x^2}{4z}}z^{-\mu-1}dz.$$
 Moreover, from \eqref{densproof} it is easy to verify that
 $$\int_{-\infty}^\infty P\left\{X_m(G_\alpha(t))\in dx|N(t)=n\right\}=1.$$

\end{proof}
\begin{remark}
From \eqref{densproof} it is straightforward to observe that
 $$P\left\{X_m(G_\alpha(t))\in dx|N(t)=n\right\}$$
is equal to the density of a Gaussian random variable with variance $\frac{2c^2}{t^2}ZW$ where $Z$ is gamma variable with parameter $\frac{\alpha+1}{2}$ and 1, while $W\sim B(\frac\alpha 2,v_m-\frac\alpha 2 +1)$.
\end{remark}
\begin{remark}
analogously to the Bessel case, the approach developed in the previous proof is enought to show that 
\begin{align}\label{distgamma0}
&P\left\{X_\nu(G_\alpha(t))\in dx|N(t)=0\right\}\\
&=\frac{dx}{\Gamma(\frac{\alpha+1}{2})B(\frac\alpha 2,\nu-\frac\alpha 2+1)} \int_0^1dw w^{\frac{\alpha}{2}-1}(1-w)^{\nu-\frac\alpha 2}\frac{t}{\sqrt{\pi w}c}\left(\frac{t|x|}{2c\sqrt{w}}\right)^{\frac\alpha 2}K_{-\frac\alpha 2}\left(\frac{t|x|}{c\sqrt{w}}\right)\notag
 \end{align}
 with $\nu>\frac \alpha 2-1$.
 \end{remark}

For $\alpha=1$, $G_1(t)$ becomes an exponential random process. The exponential clock permits us to derive an interesting interpretation of the probability \eqref{distgamma}. Indeed, since
$$K_{\pm \frac12}(x)=\sqrt{\frac{\pi}{2x}}e^{-x}$$
we get that
 \begin{eqnarray}\label{distlap}
P\left\{X_m(G_1(t))\in dx|N(t)=n\right\}=\frac{1}{B(\frac1 2,v_m+\frac1 2)} \int_0^1dw w^{-\frac{1}{2}}(1-w)^{v_m-\frac12}\frac{t}{2c\sqrt{ w}}e^{-\frac{t|x|}{c\sqrt{w}}}
 \end{eqnarray}
 for $n\geq 0$, being the condition $v_m>-\frac 12$ always satisfied. It means that the conditioned probability of $X_m(G_1(t))$ is equivalent to the distribution of a Laplace random variable with parameter $\frac{t}{2c\sqrt{ W}}$, where $W\sim B(\frac12,v_m+\frac12)$. 
 \begin{remark}
 The process $X_0(G_1(t))$ and the occupation time $\Gamma(t)$ are linked by the following relationships
 \begin{align*}
P\left\{X_0(G_1(t))\in dx|N(t)=0\right\}&=\frac{dx}{\pi}\int_0^tP\{\Gamma(t)\in ds\}\frac{1}{2cs}e^{-\frac{|x|}{cs}}ds\\\\
 P\left\{X_0(G_1(t))\in dx|N(t)=2\right\}&=P\left\{X_1(G_1(t))\in dx|N(t)=0\right\}\\
&= \frac{dx 2}{\pi t}\int_0^tP\{\Gamma(t)\in ds|B(t)<0\}\frac{1}{2cs}e^{-\frac{|x|}{cs}}ds
 \end{align*}
 as it is possible to derive by \eqref{distgamma0} and \eqref{distlap}.
 \end{remark}
 
 Now, by means of the same steps used in the proof of Theorem \ref{cor}, it in not hard to prove the result contained in the next Theorem. Therefore, we omitted the proof.

 \begin{theorem}\label{cor2}
 For $\alpha=1$, we have that
 \begin{equation}
 P\left\{X_m(G_1(t))\in dx\right\}=dx\int_0^{ct}\frac{t^2}{2y}e^{-\frac{t^2|x|}{y}}q^m(y,t)dy
 \end{equation}
 where $q^m(y,t)=2p^m(y,t),\,m=0,1$.
 \end{theorem}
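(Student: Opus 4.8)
The plan is to recover the unconditional density of $X_m(G_1(t))$ from the conditional one in \eqref{distlap} by averaging over the Poisson law of $N(t)$, imitating step by step the proof of Theorem \ref{cor}. Since \eqref{distlap} holds for every $n\ge 0$, I would start from
\[
P\{X_m(G_1(t))\in dx\}=dx\,e^{-\lambda t}\sum_{n=0}^\infty\frac{(\lambda t)^n}{n!}\,\frac{1}{B\!\left(\tfrac12,v_m+\tfrac12\right)}\int_0^1 w^{-\frac12}(1-w)^{v_m-\frac12}\,\frac{t}{2c\sqrt w}\,e^{-\frac{t|x|}{c\sqrt w}}\,dw,
\]
with $v_0=\tfrac n2$, $v_1=n+1$. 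All summands being nonnegative, Tonelli's theorem permits interchanging the sum and the $w$-integral, so the whole Poisson series is pushed inside as a single power series in $(1-w)$ multiplying the common kernel $\tfrac{t}{2c\sqrt w}\,e^{-t|x|/(c\sqrt w)}$.

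The next step is to identify that inner series. For $m=0$ one has $B(\tfrac12,\tfrac{n+1}{2})^{-1}=\Gamma(\tfrac n2+1)/(\sqrt\pi\,\Gamma(\tfrac{n+1}{2}))$, and the duplication formula $n!=2^{n}\pi^{-1/2}\,\Gamma(\tfrac{n+1}{2})\,\Gamma(\tfrac n2+1)$ reduces the coefficient of $(\lambda t)^n(1-w)^{n/2}$ to $1/(2^n\Gamma^2(\tfrac{n+1}{2}))$; for $m=1$ the analogous simplification leaves the coefficient $(n+1)/(\sqrt\pi\,\Gamma(n+\tfrac32))$ in front of $(\lambda t)^n(1-w)^{n+1/2}$. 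Applying the change of variable $y=ct\sqrt w$ (so $1-w=(c^2t^2-y^2)/(c^2t^2)$, $w^{-1/2}=ct/y$, $dw=2y(c^2t^2)^{-1}dy$) and comparing with the defining series of $p^0(y,t)$ in \eqref{law} and of $p^1(y,t)$ in \eqref{law2}, I expect these power series to collapse, term by term, to $e^{\lambda t}\sqrt{c^2t^2-y^2}\,p^0(y,t)$ when $m=0$ and to $ct\,e^{\lambda t}\,p^1(y,t)$ when $m=1$.

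Finally I would substitute back and collect. Under $y=ct\sqrt w$ the kernel becomes $\tfrac{t}{2c\sqrt w}\,e^{-t|x|/(c\sqrt w)}=\tfrac{t^2}{2y}\,e^{-t^2|x|/y}$, the factor $e^{\lambda t}$ produced by the series cancels the Poisson prefactor $e^{-\lambda t}$, and the remaining algebraic factors — the $w^{-1/2}$ in the integrand (and, for $m=0$, the leftover $(1-w)^{-1/2}$ coming from $v_0-\tfrac12$), the Jacobian $2y/(c^2t^2)$, the constant $t^2/(2y)$ from the kernel, and $\sqrt{c^2t^2-y^2}$ — telescope to $t^2/y$ in both cases, i.e. to $\tfrac{t^2}{2y}\cdot 2p^m(y,t)=\tfrac{t^2}{2y}q^m(y,t)$. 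This yields the stated identity. The only genuinely delicate point is the Gamma-function bookkeeping in the two series reductions and checking that the Jacobian and the Beta normalisers recombine exactly; the interchange of sum and integral is immediate from positivity, and no special-function identity beyond the duplication formula and the definitions \eqref{law}, \eqref{law2} is needed.
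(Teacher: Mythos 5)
Your proposal is correct and follows exactly the route the paper intends: the paper omits this proof, stating only that it proceeds "by means of the same steps used in the proof of Theorem \ref{cor}," and your argument — averaging \eqref{distlap} over the Poisson law, pushing the series inside the $w$-integral, reducing the coefficients via the duplication formula to recover the defining series of $p^0$ and $p^1$ from \eqref{law} and \eqref{law2}, and substituting $y=ct\sqrt{w}$ — is precisely that adaptation, with the Gamma-function and Jacobian bookkeeping checking out. No gap.
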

Theorem \ref{cor2} claims that $X_m(t)$ with exponential time, is equivalent, in distribution, to a Laplace random variable with parameter $\frac{t^2}{|X_m(t)|}$.
 
 As for the Bessel random time, also in this case we consider a random time obtained iterating $l$ times a Gamma random variable. Hence, we indicate the iterated Gamma process with
\begin{equation*}
\mathcal{G}_l(t)=G_{\alpha_1}(G_{\alpha_2}(\ldots (G_{\alpha_{l+1}}(t))\ldots))
\end{equation*} 
having density law given by
$$g_l(s,t)=\int_0^\infty\frac{t_1^{\alpha_1}}{\Gamma(\alpha_1)}s^{\alpha_1-1}e^{-t_1s}dt_1\int_0^\infty\frac{t_2^{\alpha_2}}{\Gamma(\alpha_2)}t_1^{\alpha_2-1}e^{-t_2t_1}dt_2\ldots \int_0^\infty\frac{t^{\alpha_{l+1}}}{\Gamma(\alpha_{l+1})}t_l^{\alpha_{l+1}-1}e^{-t_lt}dt_{l}$$
where $\alpha_{j+1},j=0,1,...,n,$ is strictly positive. Therefore, by using the same approach contained in the proof of Theorem \ref{teogamma}, the following result yields.
 \begin{theorem}
 Given $N(t)=n$, such that $v_m>\frac {\alpha_1}{2}-1$, we have that
 \begin{eqnarray}
&& P\left\{X_m(\mathcal{G}_l(t))\in dx|N(t)=n\right\}\\
 &&=\frac{dx}{\Gamma(\frac{\alpha_1+1}{2})B(\frac {\alpha_1}{2},v_m-\frac {\alpha_1}{2}+1)}
\int_0^\infty\frac{t_2^{\alpha_2}}{\Gamma(\alpha_2)}t_1^{\alpha_2-1}e^{-t_2t_1}dt_1\ldots \int_0^\infty\frac{t^{\alpha_{l+1}}}{\Gamma(\alpha_{l+1})}t_l^{\alpha_{l+1}-1}e^{-t_lt}dt_{l}\notag\\
 &&\quad \times\int_0^1w^{\frac{\alpha_1}{2}-1}(1-w)^{v_m-\frac{\alpha_1} {2}}\frac{t_1}{\sqrt{\pi w}c}\left(\frac{t_1|x|}{2c\sqrt{w}}\right)^{\frac{\alpha_1} {2}}K_{-\frac{\alpha_1}{ 2}}\left(\frac{t_1|x|}{c\sqrt{w}}\right)dw\notag
 \end{eqnarray}
 with $x\in \mathbb{R}$.
 \end{theorem}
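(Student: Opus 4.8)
The plan is to follow verbatim the argument used for Theorem~\ref{teogamma} (the case $l=0$), replacing the single Gamma density $g_\alpha(s,t)$ by the iterated density $g_l(s,t)$. First I would condition on the value of the random clock: since $X_m(\mathcal{G}_l(t))$ takes values in the random interval $(-c\mathcal{G}_l(t),c\mathcal{G}_l(t))$, its support is all of $\mathbb{R}$ and one may write
\[
P\{X_m(\mathcal{G}_l(t))\in dx\,|\,N(t)=n\}=\int_0^\infty P\{X_m(s)\in dx\,|\,N(t)=n\}\,g_l(s,t)\,ds,
\]
inserting the conditional density $p_n^m(x,s)$ of Theorem~\ref{condlaw}. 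Rather than invert this directly, I would pass to the Fourier transform and use the closed form of the conditional characteristic function from Corollary~\ref{corcf}, namely $E\{e^{i\alpha X_m(s)}\,|\,N(t)=n\}=\Gamma(v_m+1)(2/\alpha c)^{v_m}s^{-v_m}J_{v_m}(\alpha cs)$.

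The key structural observation is that in $g_l(s,t)$ only the factor $\frac{t_1^{\alpha_1}}{\Gamma(\alpha_1)}s^{\alpha_1-1}e^{-t_1 s}$ depends on $s$, while the remaining factors $\int_0^\infty\frac{t_2^{\alpha_2}}{\Gamma(\alpha_2)}t_1^{\alpha_2-1}e^{-t_2t_1}dt_2\cdots\int_0^\infty\frac{t^{\alpha_{l+1}}}{\Gamma(\alpha_{l+1})}t_l^{\alpha_{l+1}-1}e^{-t_lt}dt_l$ constitute a fixed (sub-probability) measure $\mu(dt_1)$ in the innermost time variable $t_1$. Expanding $J_{v_m}$ in its defining power series and interchanging the series with all the integrals, the $s$-integral collapses to $\int_0^\infty s^{2k+\alpha_1-1}e^{-t_1 s}\,ds=\Gamma(2k+\alpha_1)/t_1^{2k+\alpha_1}$ — exactly the expression appearing in the $l=0$ computation with $t$ replaced by $t_1$. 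From this point the manipulation is the one already performed in Theorem~\ref{teogamma}: apply the Legendre duplication formula to $\Gamma(2k+\alpha_1)$, rewrite $\Gamma(k+\tfrac{\alpha_1}{2})/\Gamma(k+v_m+1)$ up to constants as $B(k+\tfrac{\alpha_1}{2},v_m-\tfrac{\alpha_1}{2}+1)/\Gamma(v_m-\tfrac{\alpha_1}{2}+1)$ (this is where the hypothesis $v_m>\tfrac{\alpha_1}{2}-1$ enters, guaranteeing the Beta function is well defined), and represent $\Gamma(k+\tfrac{\alpha_1+1}{2})$ and the Beta function by integrals over $z\in(0,\infty)$ and $w\in(0,1)$. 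Resumming the series then produces the factor $e^{-(\alpha c/t_1)^2 zw}$, so the conditional characteristic function is exhibited as a Gaussian mixture.

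It then remains to invert term by term. The Gaussian factor gives a Gaussian density in $x$ with variance $2c^2zw/t_1^2$, and the $z$-integral $\int_0^\infty e^{-z}z^{(\alpha_1+1)/2-1}e^{-t_1^2x^2/(4c^2zw)}\,dz$ is recognized, via the representation $K_\mu(x)=\tfrac12(x/2)^\mu\int_0^\infty e^{-z-x^2/(4z)}z^{-\mu-1}\,dz$, as the claimed $\left(\tfrac{t_1|x|}{2c\sqrt w}\right)^{\alpha_1/2}K_{-\alpha_1/2}\!\left(\tfrac{t_1|x|}{c\sqrt w}\right)$ term. Restoring the measure $\mu(dt_1)$ reproduces exactly the nested integrals displayed in the statement; a normalization check $\int_{\mathbb{R}}P\{X_m(\mathcal{G}_l(t))\in dx\,|\,N(t)=n\}=1$ can be carried out as in Theorem~\ref{teogamma}.

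The only genuine subtlety I would spell out — everything else being the bookkeeping already done for $l=0$ — is the Fubini/Tonelli justification for interchanging the $(l+1)$-fold iterated integral with the Bessel series. Here I would bound the absolute series by the corresponding $I_{v_m}$-type series, which has nonnegative terms, and verify that its $(l+1)$-fold integral against $g_l$ converges for $v_m>\tfrac{\alpha_1}{2}-1$; this legitimizes in one stroke all the exchanges of summation and integration performed above.
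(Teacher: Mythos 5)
Your proposal is correct and follows exactly the route the paper intends: the paper gives no separate argument for the iterated case, stating only that it follows "by using the same approach" as the single Gamma-time theorem, and your reduction --- isolating the innermost factor $\frac{t_1^{\alpha_1}}{\Gamma(\alpha_1)}s^{\alpha_1-1}e^{-t_1s}$ as the only $s$-dependent piece, carrying the remaining nested integrals along as a fixed measure in $t_1$, and then repeating the characteristic-function/duplication-formula/Beta-representation/$K_{-\alpha_1/2}$-inversion computation with $t$ replaced by $t_1$ --- is precisely that argument. Your added Fubini/Tonelli justification via the nonnegative $I_{v_m}$-type majorant is a welcome extra that the paper omits.
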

Furthermore, at this point also the probability $P \left\{X_\nu(\mathcal{G}_l(t))\in dx|N(t)=0\right\}$ immediately follows.

By setting $\alpha_1=1$, it is not an hard task to provide the following unconditonal distribution 
   \begin{eqnarray}
 &&P\left\{X_m(G_{1}(G_{\alpha_2}(\cdots(G_{\alpha_{l+1}}(t))\cdots)))\in dx\right\}\\
 &&=dx
 \int_0^\infty\frac{t_2^{\alpha_2}}{\Gamma(\alpha_2)}t_1^{\alpha_2-1}e^{-t_2t_1}dt_1\ldots \int_0^\infty\frac{t^{\alpha_{l+1}}}{\Gamma(\alpha_{l+1})}t_l^{\alpha_{l+1}-1}e^{-t_lt}dt_{l}\int_0^{ct}\frac{t^2}{2t_1y}e^{-\frac{t^2|x|}{t_1y}}q^m(y,t)dy.\notag
 \end{eqnarray}
 Hence, $X_m(G_{1}(G_{\alpha_2}(\cdots(G_{\alpha_{l+1}}(t))\cdots)))$ is distributed as a Laplace random variable with parameter given by
 $$\frac{t^2}{G_{\alpha_1}(\cdots(G_{\alpha_{l}}(t))\cdots)|X_m(t)|}.$$

In order to conclude the discussion on the random times, we note that $R^d(t)$ and $G_\alpha(t)$ can be mixed obtaining a new class of random times. Obviously, $R^d(t)$ and $G_\alpha(t)$ are thought to be mutually independent. Now, we present the following Theorem. 
\begin{theorem}\label{teomix}
For $n\geq 1$ and $v_m>\frac d2-1$, we have that
\begin{eqnarray}
&&P\left\{X_m(R^d(G_\alpha(t)))\in dx|N(t)=n\right\}\\
&&=\frac{dx}{\Gamma(\alpha)B\left(\frac d2,v_m-\frac d2+1\right)}\int_0^1dw w^{\frac d2-1}(1-w)^{v_m-\frac d2}\frac{\sqrt{2t}}{\sqrt{\pi w}c}\left(\frac{\sqrt{t}|x|}{c\sqrt{2w}}\right)^{\alpha-\frac12}K_{-\alpha+\frac12}\left(\frac{\sqrt{2t}|x|}{c\sqrt{w}}\right)\notag
\end{eqnarray}
\end{theorem}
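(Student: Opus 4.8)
The plan is to follow the same two-step conditioning scheme used in the proofs of Theorem~\ref{besselteo} and Theorem~\ref{teogamma}: first freeze the outer Gamma clock, invoke the already-known law of $X_m(R^d(\cdot))$, and then integrate out the Gamma time. Throughout I write $\xi$ for the Fourier variable, to keep it distinct from the Gamma parameter $\alpha$.

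First I would condition on $G_\alpha(t)=s$. Since $R^d$ and $G_\alpha$ are independent of $X_m(t)$ and of $N(t)$,
$$P\left\{X_m(R^d(G_\alpha(t)))\in dx\mid N(t)=n\right\}=\int_0^\infty P\left\{X_m(R^d(s))\in dx\mid N(t)=n\right\}g_\alpha(s,t)\,ds,$$
so it suffices to compute the characteristic function and invert. By the Fourier transform obtained inside the proof of Theorem~\ref{besselteo}, for $v_m>\tfrac d2-1$,
$$E\left\{e^{i\xi X_m(R^d(s))}\mid N(t)=n\right\}=\frac{1}{B\!\left(\frac d2,v_m-\frac d2+1\right)}\int_0^1w^{\frac d2-1}(1-w)^{v_m-\frac d2}e^{-\frac{\xi^2c^2sw}{2}}\,dw.$$

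Next I would multiply by $g_\alpha(s,t)=\frac{t^\alpha}{\Gamma(\alpha)}s^{\alpha-1}e^{-ts}$, interchange the (absolutely convergent, all factors nonnegative) integrals by Fubini, and perform the elementary Gamma integral $\int_0^\infty s^{\alpha-1}e^{-(t+\xi^2c^2w/2)s}\,ds=\Gamma(\alpha)\,(t+\xi^2c^2w/2)^{-\alpha}$. This expresses $E\{e^{i\xi X_m(R^d(G_\alpha(t)))}\mid N(t)=n\}$ as a Beta-mixture of $\bigl(1+\tfrac{\xi^2c^2w}{2t}\bigr)^{-\alpha}$. To invert, I would re-introduce a Gamma representation $A^{-\alpha}=\Gamma(\alpha)^{-1}\int_0^\infty z^{\alpha-1}e^{-Az}\,dz$ with $A=1+\tfrac{\xi^2c^2w}{2t}$, so the inner factor becomes $e^{-\xi^2c^2wz/(2t)}$, i.e.\ the characteristic function of a centered Gaussian of variance $c^2wz/t$. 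Inverting term by term and then carrying out the $z$-integral by means of the representation $K_\mu(\zeta)=\tfrac12(\zeta/2)^\mu\int_0^\infty e^{-u-\zeta^2/(4u)}u^{-\mu-1}\,du$ already used in the proof of Theorem~\ref{teogamma} — with $\zeta=\frac{\sqrt{2t}\,|x|}{c\sqrt w}$ and $\mu=\tfrac12-\alpha$, using $K_{-\mu}=K_\mu$ — collapses the $z$-integral to the stated $K_{-\alpha+\frac12}$ factor, leaving exactly the claimed $w$-integral.

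I do not expect a serious obstacle here: the only real labour is the constant-chasing when matching $\frac{\sqrt t}{\sqrt{2\pi c^2w}}\cdot 2(\zeta/2)^{\alpha-1/2}$ with $\frac{\sqrt{2t}}{\sqrt{\pi w}c}\bigl(\frac{\sqrt t\,|x|}{c\sqrt{2w}}\bigr)^{\alpha-1/2}$, together with the routine Fubini justification and the fact that the inverse Fourier transform can be pulled inside the $w$- and $z$-integrals by dominated convergence. As a conceptual shortcut, one may instead argue purely in distribution: Theorem~\ref{besselteo} says that, conditional on $N(t)=n$, $X_m(R^d(s))\stackrel{d}{=}\mathcal N(0,c^2sW)$ with $W\sim B(\tfrac d2,v_m-\tfrac d2+1)$, hence $X_m(R^d(G_\alpha(t)))$ is, conditionally, a centered Gaussian with random variance $c^2G_\alpha(t)W$; since $G_\alpha(t)$ is Gamma distributed with shape $\alpha$ and rate $t$, this is a symmetric variance--gamma (generalized Laplace) law whose density is the $K_{-\alpha+\frac12}$ expression, and integrating against the law of $W$ yields the theorem.
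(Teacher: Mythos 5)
Your proposal is correct and follows essentially the same route as the paper: both compute the conditional Fourier transform, reduce it to a Beta-mixture of Gaussian characteristic functions with Gamma-randomized variance, and invert via the integral representation of $K_\mu$. The only difference is organizational — you condition on the outer Gamma clock and reuse the characteristic function from the proof of Theorem \ref{besselteo}, whereas the paper conditions on the full composed time $R^d(G_\alpha(t))$ and redoes the Bessel-series computation in place — and this does not change the substance of the argument.
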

\begin{proof} Since $R^d(G_\alpha(t))$ has density given by
$$ \frac{t^\alpha}{\Gamma(\alpha)\Gamma(\frac d2)2^{\frac d2-1}}\int_0^\infty\frac{r^{d-1}}{z^{\frac d2}}e^{-\frac{r^2}{2z}}z^{\alpha-1}e^{-t z}dz,$$
we can write that
\begin{eqnarray*}
&&P\left\{X_m(R^d(G_\alpha(t)))\in dx|N(t)=n\right\}\\
&&=\int_0^\infty P\left\{X_m(R^d(G_\alpha(t)))\in dx|N(t)=n,R^d(G_\alpha(t))=s \right\}P\{R^d(G_\alpha(t))\in  ds\}\\
&&=\frac{\Gamma(v_m+1)\Gamma(v_m)}{2\pi\Gamma(2v_m)}\frac{t^\alpha dx}{\Gamma(\alpha)\Gamma(\frac d2)2^{\frac d2-1}}\\
&&\quad\times\int_0^\infty ds\int_0^\infty dz\left(\frac{2}{c s}\right)^{v_m}(c^2s^2-x^2)^{v_m-\frac12}{\bf 1}_{\{|x|<cs\}}\frac{s^{d-1}}{z^{\frac d2}}e^{-\frac{s^2}{2z}}z^{\alpha-1}e^{-t z}.
\end{eqnarray*}
Therefore, by means of the Corollary \ref{corcf},  we are able to explicit the Fourier transform of the previous probability distribution as follows 
\begin{eqnarray*}
&&\int_{-\infty}^{+\infty}e^{i\alpha x}P\left\{X_m(R^d(G_\alpha(t)))\in dx|N(t)=n\right\}\\
&&=\Gamma(v_m+1)\left(\frac{2}{\alpha c}\right)^{v_m}\frac{t^\alpha}{\Gamma(\alpha)\Gamma(\frac d2)2^{\frac d2-1}}\int_0^\infty z^{\alpha-\frac d2-1}e^{-t z}dz \int_0^\infty J_{v_m}(\alpha cs)s^{d-v_m-1}e^{-\frac{s^2}{2z}}ds\\
&&=\frac{t^\alpha\Gamma(v_m+1)}{\Gamma(\alpha)\Gamma(\frac d2)2^{\frac d2-1}}\int_0^\infty z^{\alpha-\frac d2-1}e^{-t z}dz\sum_{k=0}^\infty\frac{(-1)^k}{k!\Gamma(k+v_m+1)}\left(\frac{\alpha c}{2}\right)^{2k}\int_0^\infty s^{2k+d-1} e^{-\frac{s^2}{2z}}ds\\
&&=\left(y=\frac{s^2}{2z}\right)\\
&&=\frac{t^\alpha\Gamma(v_m+1)}{\Gamma(\alpha)\Gamma(\frac d2)}\int_0^\infty z^{\alpha-1}e^{-t z}dz\sum_{k=0}^\infty\frac{(-1)^k}{k!\Gamma(k+v_m+1)}\left(\frac{\alpha^2 c^2 z}{2}\right)^{k}\int_0^\infty y^{k+\frac d2-1} e^{- y}dy\\
&&=\frac{t^\alpha\Gamma(v_m+1)}{\Gamma(\alpha)\Gamma(\frac d2)}\int_0^\infty z^{\alpha-1}e^{-t z}dz\sum_{k=0}^\infty\frac{(-1)^k\Gamma(k+\frac d2)}{k!\Gamma(k+v_m+1)}\left(\frac{\alpha^2 c^2 z}{2}\right)^{k}\\
&&=\frac{t^\alpha\Gamma(v_m+1)}{\Gamma(\alpha)\Gamma(\frac d2)\Gamma\left(v_m-\frac d2+1\right)}\int_0^\infty z^{\alpha-1}e^{-t z}dz\sum_{k=0}^\infty\frac{(-1)^k}{k!}B\left(k+\frac d2,v_m-\frac d2+1\right)\left(\frac{\alpha^2 c^2 z}{2}\right)^{k}\\
&&=\frac{t^\alpha}{\Gamma(\alpha)B\left(\frac d2,,v_m-\frac d2+1\right)}\int_0^\infty z^{\alpha-1}e^{-t z}dz\sum_{k=0}^\infty\frac{(-1)^k}{k!}\left(\frac{\alpha^2 c^2 z}{2}\right)^{k}\int_0^1w^{k+\frac d2-1}(1-w)^{v_m-\frac d2}dw\\
&&=\frac{t^\alpha}{\Gamma(\alpha)B\left(\frac d2,,v_m-\frac d2+1\right)}\int_0^\infty z^{\alpha-1}e^{-t z}dz\int_0^1w^{\frac d2-1}(1-w)^{v_m-\frac d2}e^{-\frac{\alpha^2 c^2zw}{2}}dw
\end{eqnarray*}
By inverting the above characteristic function, we obtain that
\begin{eqnarray*}
&&P\left\{X_m(R^d(G_\alpha(t)))\in dx|N(t)=n\right\}\\
&&=\frac{t^\alpha}{\Gamma(\alpha)B\left(\frac d2,,v_m-\frac d2+1\right)}\int_0^1w^{\frac d2-1}(1-w)^{v_m-\frac d2}dw \int_0^\infty z^{\alpha-1}\frac{e^{-tz-\frac{x^2}{2c^2zw}}}{\sqrt{2\pi zw}c}dz\\
&&=(u=t z)\\
&&=\frac{1}{\Gamma(\alpha)B\left(\frac d2,,v_m-\frac d2+1\right)}\int_0^1w^{\frac d2-1}(1-w)^{v_m-\frac d2}dw\int_0^\infty u^{\alpha-\frac 12-1}\frac{\sqrt{t}e^{-u-\frac{x^2t}{2c^2uw}}}{\sqrt{2\pi w}c}du\\
&&=\frac{1}{\Gamma(\alpha)B\left(\frac d2,,v_m-\frac d2+1\right)}\int_0^1dw w^{\frac d2-1}(1-w)^{v_m-\frac d2}\frac{\sqrt{2t}}{\sqrt{\pi w}c}\left(\frac{\sqrt{t}|x|}{c\sqrt{2w}}\right)^{\alpha-\frac12}K_{-\alpha+\frac12}\left(\frac{\sqrt{2t}|x|}{c\sqrt{w}}\right).
\end{eqnarray*}
\end{proof}
\begin{remark}
For $\alpha=1$, one has
$$P\{X_m(R^d(G_1(t)))\in dx|N(t)=n\}=\frac{dx}{B\left(\frac d2,,v_m-\frac d2+1\right)}\int_0^1dw w^{\frac d2-1}(1-w)^{v_m-\frac d2}\frac{\sqrt{2t}}{2c\sqrt{w}}e^{-\frac{\sqrt{2t}|x|}{c\sqrt{w}}}$$
and for $d=1$, after some calculations, the following distribution yields 
$$P\{X_m(|B(G_1(t))|)\in dx\}=dx\int_0^{ct}\frac{\sqrt{2}t^{\frac32}}{2y}e^{-\frac{\sqrt{2}t^{\frac32}|x|}{y}}q^m(y,t)dy.$$
\end{remark}
\begin{remark}
Let $X$ and $Y$be two independent random variables distributed as a Gaussian with mean zero and variance respectively equal to $\sigma^2$ and 1. It is well-known that $V=XY$ has the following density law 
\begin{eqnarray*}
f_V(v)=\int_{-\infty}^{+\infty}\frac{e^{-\frac{x^2}{2\sigma^2}}}{\sqrt{2\pi}\sigma}\frac{e^{-\frac{v^2}{2x^2}}}{\sqrt{2\pi}}dx=\frac{1}{\pi\sigma}K_0\left(\frac{|v|}{\sigma}\right).
\end{eqnarray*}
These considerations and Theorem \ref{teomix} permit us to state that for $\alpha=\frac12$ and $d=1$,
$$P\left\{X_m(|B(G_\frac12(t))|)\in dx|N(t)=n\right\}=\frac{dx}{B(\frac 1 2,\frac {n+1}{2})} \int_0^1dw w^{\frac{d}{2}-1}(1-w)^{v_m-\frac d 2}\frac{\sqrt{2t}}{\pi c\sqrt{ w}}K_{0}\left(\frac{\sqrt{2t}|x|}{c\sqrt{w}}\right)$$
holds for each $n$. In other words,
conditionally on the number of Poisson events during the interval $[0,t]$, the law of $X_m(|B(G_{\frac12}(t))|)$ is equivalent to the distribution of the product of a standard Gaussian and a Normal random variable, indipendent from previous one, with mean zero and variance $\frac{c\sqrt{W}}{\sqrt{2t}}$, with $W\sim B(\frac 12,\frac {n+ 1}{2})$.

\end{remark}

\section{Some remarks on random motions in higher spaces}
Let $\underline{{\bf X}}_2(t)=(X_1(t),X_2(t))$ and $\underline{{\bf X}}_4(t)=(X_1(t),X_2(t),X_3(t),X_4(t))$ be respectively a planar and four-dimensional random flight, then the results presented in the Section \ref{sec:comp}
can be extended to these random processes. 
 
We also use the follwing notations: $\underline{{\bf \alpha}}_2=(\alpha_1,\alpha_2)$, $\underline{{\bf \alpha}}_4=(\alpha_1,\alpha_2,\alpha_3,\alpha_4)$, $\underline{{\bf x}}_2=(x_1,x_2)$, $\underline{{\bf x}_4}=(x_1,x_2,x_3,x_4)$. Futhermore, let $||\cdot||$ be the euclidean norm and $<\cdot,\cdot>$ the scalar product. As proved in Orsingher and De Gregorio (2007b) the characteristic function and the conditional probabilities of the random flights in the plane are given by
\begin{align*}
&E\left\{e^{i<\underline{\alpha}_2, {\bf X}_2(t)>}|N(t)=n\right\}=\frac{\Gamma\left(\frac n2+1\right)2^{\frac n2}}{( ct||\underline{{\bf   \alpha}}_2||)^{\frac n2}}J_{\frac n2}(  ct||\underline{{\bf \alpha}}_2||),
\\
&p_n(||\underline{{\bf x}}_2||,t)=
\frac{n}{2\pi (ct)^n}(c^2t^2-||\underline{{\bf x}}_2||^2)^{\frac n2-1},
\end{align*}  
while for $\underline{{\bf X}}_4(t)$, one has
\begin{align*}
&E\left\{e^{i<\underline{\alpha}_4, {\bf X}_4(t)>}|N(t)=n\right\}=\frac{\Gamma\left(n+2\right)2^{ n+1}}{( ct||\underline{{\bf \alpha}}_4||)^{n+1}}J_{n+1}(  ct||\underline{{\bf \alpha}}_4||),\\
& p_n(||\underline{{\bf x}}_4||,t)=\frac{n(n+1)}{\pi^2(ct)^{2n+2}} 
(c^2t^2-||\underline{{\bf x}}_4||^2)^{n-1}.
\end{align*}  
Therefore, for the planar random flights at Bessel random time, namely $(X_1(R^d(t)),X_2(R^d(t)))$, by following the same steps of the proof of Theorem \ref{stopdist}, we have that
\begin{align}
&P\left\{X_1(R^d(t))\in dx_1,X_2(R^d(t))\in dx_2|N(t)=n\right\}\label{condprobpr}\\
&=\frac{dx_1dx_2}{B\left(\frac d2,\frac n2-\frac d2+1\right)}\int_0^1w^{\frac d2-1}(1-w)^{\frac n2-\frac d2}\frac{e^{-\frac{||\underline{{\bf x}}_2||^2}{2c^2t w}}}{2\pi twc^2}dw\notag
\\ \notag\\
&P\left\{X_1(G_\alpha(t))\in dx_1,X_2(G_\alpha(t))\in dx_2|N(t)=n\right\}\\
&=\frac{dx_1dx_2}{\Gamma(\frac{\alpha+1}{2})B(\frac\alpha 2,v_m-\frac\alpha 2+1)} \int_0^1dw w^{\frac{\alpha}{2}-1}(1-w)^{v_m-\frac\alpha 2}\frac{t}{\sqrt{\pi w}c}\left(\frac{t||\underline{{\bf x}}_2||}{2c\sqrt{w}}\right)^{\frac\alpha 2}K_{-\frac\alpha 2}\left(\frac{t||\underline{{\bf x}}_2||}{c\sqrt{w}}\right)\notag 
\end{align}
which hold if $v_m>\frac d2-1$ again.
We observe that from the probability \eqref{condprobpr} for $d=1$, we reobtain the result $(3.3)$ showed in Beghin and Orsingher (2009). Similar considerations yield for the four-dimensional random flights.

The random flights in higher spaces have directions uniformly distributed on a multidimensional hypersphere. It would be interesting to consider a model, for example in the plane, with a different density law with respect to the uniform one. Let us consider a planar random flight with density law similar to \eqref{dens:vel}, for example
$$f(\theta)=\frac{1}{2\pi}\sin^2\theta,\quad \theta\in[0,2\pi].$$
In this case, we obtain a random process describing a motion tending to move in a land of the plane with high probability. Therefore, we obtain a random motion with drift, which is persistent along a specific portion of the plane. Hence, in order to calculate the characteristic function, we need of the following integral
$$\int_0^{2\pi}\exp\{iz(\alpha\cos\theta+\beta\sin\theta)\}\sin^{2}\theta d\theta$$
which it is work out as follows

\begin{eqnarray*}
&&\int_0^{2\pi}\exp\{iz(\alpha\cos\theta+\beta\sin\theta)\}\sin^{2}\theta d\theta\\
&&=\sum_{k=0}^\infty\frac{(iz)^k}{k!}\int_0^{2\pi}(\alpha\cos\theta+\beta\sin\theta)^k\sin^{2}\theta d\theta\\
&&=\sum_{k=0}^\infty\frac{(iz)^k}{k!}\sum_{r=0}^k\binom{k}{r}\alpha^r\beta^{k-r}\int_0^{2\pi}\cos^{k}\theta\sin^{k-r+2}\theta d\theta.
\end{eqnarray*}
Now, the last integral has to be splitted in two parts: $\int_0^{2\pi}=\int_0^{\pi }+\int_{\pi}^{2\pi }$. Hence, by performing a change of variable $\theta'=\theta-\pi$ in the second integral, we observe by \eqref{wallis} that the previous sum is equal to zero if $k$ and $r$ are odd. By splitting the first integral on $(0,\frac\pi 2)$ and after a change of variable analogous to the previous one, we can write that
\begin{eqnarray*}
&&\int_0^{2\pi}\exp\{iz(\alpha\cos\theta+\beta\sin\theta)\}\sin^{2}\theta d\theta\\
&&=4\sum_{k=0}^\infty\frac{(iz)^{2k}}{(2k)!}\sum_{r=0}^k\binom{2k}{2r}\alpha^{2r}\beta^{2(k-r)}\int_0^{\pi/2}\cos^{2r}\theta\sin^{2(k-r+1)}\theta d\theta\\
&&=2\sum_{k=0}^\infty(-1)^k\frac{z^{2k}}{(2k)!}\sum_{r=0}^k\binom{2k}{2r}\alpha^{2r}\beta^{2(k-r)}\frac{\Gamma(r+\frac12)\Gamma(k-r+1+\frac12)}{\Gamma(k+2)}\\
&&=2\pi\sum_{k=0}^\infty(-1)^k\frac{z^{2k}}{(2k)!}\sum_{r=0}^k\binom{2k}{2r}\alpha^{2r}\beta^{2(k-r)}\frac{2^{-2k}\Gamma(2r)\Gamma(2(k-r+1))}{\Gamma(k+2)\Gamma(r)\Gamma(k-r+1)}\\
&&=2\pi\sum_{k=0}^\infty(-1)^k\frac{z^{2k}}{\Gamma(k+2)2^{2k}}\sum_{r=0}^k\frac{2(k-r)+1}{2r\Gamma(r)\Gamma(k-r+1)}\alpha^{2r}\beta^{2(k-r)}\\
&&=2\pi\sum_{k=0}^\infty(-1)^k\frac{z^{2k}}{(k+1)!2^{2k}}\left\{\sum_{r=0}^k\frac{\alpha^{2r}\beta^{2(k-r)}}{2(r!)(k-r)!}+\sum_{r=0}^k\frac{\alpha^{2r}\beta^{2(k-r)}}{r!(k-r-1)!}\right\}\\
&&=2\pi\sum_{k=0}^\infty(-1)^k\frac{z^{2k}}{(k+1)!2^{2k}}\left\{\frac{1}{2(k!)}\sum_{r=0}^k\binom{k}{r}\alpha^{2r}\beta^{2(k-r)}+\frac{\beta^2}{(k-1)!}\sum_{r=0}^{k-1}\binom{k-1}{r}\alpha^{2r}\beta^{2(k-1-r)}\right\}\\
&&=2\pi\left\{\sum_{k=0}^\infty\frac{(-1)^kz^{2k}}{2(k+1)!k!2^{2k}}(\sqrt{\alpha^2+\beta^2})^{2k}+\beta^2\sum_{k=1}^\infty\frac{(-1)^kz^{2k}}{(k+1)!(k-1)!2^{2k}}(\sqrt{\alpha^2+\beta^2})^{2(k-1)}\right\}\\
&&=2\pi\left\{\frac{J_1(z\sqrt{\alpha^2+\beta^2})}{z\sqrt{\alpha^2+\beta^2}}-\beta^2\sum_{l=0}^\infty\frac{(-1)^l}{l!(l+2)!}\left(\frac z2\right)^{2l+2}(\sqrt{\alpha^2+\beta^2})^{2l}\right\}\\
&&=2\pi\left\{\frac{J_1(z\sqrt{\alpha^2+\beta^2})}{z\sqrt{\alpha^2+\beta^2}}-\frac{\beta^2}{\alpha^2+\beta^2}J_2(z\sqrt{\alpha^2+\beta^2})\right\}.
\end{eqnarray*}
Then, the characteristic function becomes 
\begin{eqnarray}\label{planarcf}
&&E\left\{e^{i<\underline{\alpha}_2,\underline{ {\bf X}}_2(t)>}|N(t)=n\right\}\\
&&=\frac{n!}{t^n}\int_0^tds_1\cdots\int_{s_{n-1}}^tds_{n}\prod_{j=1}^{n+1}\left\{\frac{J_1(c(s_j-s_{j-1})||\underline{\alpha}_2||)}{c(s_j-s_{j-1})||\underline{\alpha}_2||}-\frac{\alpha_2^2}{||\underline{\alpha}_2||^2}J_2(c(s_j-s_{j-1})||\underline{ \alpha}_2||)\right\}.\notag
\end{eqnarray}
From \eqref{planarcf} emerges that as expected an asimmetry is introduced by $f(\theta)$, because the particle will tend to maintains the same direction. Moreover, the inversion of the characteristic function is quite difficult, therefore seem to be not possible to obtain the explicit probability distribution of $\underline{{\bf X}}_2$ at time $t$ by means of this approach.

 \end{document}